\theoremstyle{plain}
\newtheorem{thm}{Theorem}[section]
\newtheorem{lem}[thm]{Lemma}
\newtheorem{dfn}[thm]{Definition}
\newtheorem{rmk}[thm]{Remark}
\def\D{\mathrm{D}}
\def\G{\mathscr{G}}
\def\S{\mathscr{S}}
\def\T{\mathscr{T}}
\def\d{\mathrm{d}}
\def\Cset{\mathbb{C}}
\def\Kset{\mathbb{K}}
\def\Lset{\mathbb{L}}
\def\Nset{\mathbb{N}}
\def\Qset{\mathbb{Q}}
\def\Rset{\mathbb{R}}
\def\Sset{\mathbb{S}}
\def\Tset{\mathbb{T}}
\def\Zset{\mathbb{Z}}
\def\id{\mathrm{id}}
\def\epsilon{\varepsilon}
\def\GL{\mathrm{GL}}
\def\epsilon{\varepsilon}
\DeclareMathOperator{\arccot}{arccot}
\DeclareMathOperator{\arccosh}{arccosh}
\DeclareMathOperator{\arctanh}{arctanh}
\DeclareMathOperator{\im}{Im}
\def\theequation{\arabic{section}.\arabic{equation}}
\begin{document}

% **********************************************************
% Title of This Paper
% **********************************************************

\title[Nonintegrability of the restricted three-body problem]%
{Nonintegrability of the restricted three-body problem}

\author{Kazuyuki Yagasaki}

\address{Department of Applied Mathematics and Physics, Graduate School of Informatics,
Kyoto University, Yoshida-Honmachi, Sakyo-ku, Kyoto 606-8501, JAPAN}
\email{yagasaki@amp.i.kyoto-u.ac.jp}

\date{\today}
\subjclass[2020]{70F07, 37J30, 34E10, 34M15, 34M35, 37J40}
\keywords{Restricted three-body problem; nonintegrability;
 perturbation; Morales-Ramis-Sim\'o theory}

\begin{abstract}
The problem of nonintegrability of the circular restricted three-body problem
 is very classical and important in the theory of dynamical systems.
It was partially solved by Poincar\'e in the nineteenth century:
He showed that there exists no real-analytic first integral
 which depends analytically on the mass ratio of the second body to the total
 and is functionally independent of the Hamiltonian.
When the mass of the second body becomes zero,
 the restricted three-body problem reduces to the two-body Kepler problem.
We prove the nonintegrability of the restricted three-body problem
 both in the planar and spatial cases for any nonzero mass of the second body.
Our basic tool of the proofs is a technique developed here
 for determining whether perturbations of integrable systems which may be non-Hamiltonian
 are not meromorphically integrable near resonant periodic orbits
 such that the first integrals and commutative vector fields also depend meromorphically
 on the perturbation parameter.
The technique is based on generalized versions due to  Ayoul and Zung
 of the Morales-Ramis and Morales-Ramis-Sim\'o theories.
We emphasize that our results are not just applications of the theories.
 \end{abstract}
\maketitle

% **********************************************************
% Section 1
% **********************************************************

\section{Introduction}

\begin{figure}[t]
\includegraphics[scale=0.8]{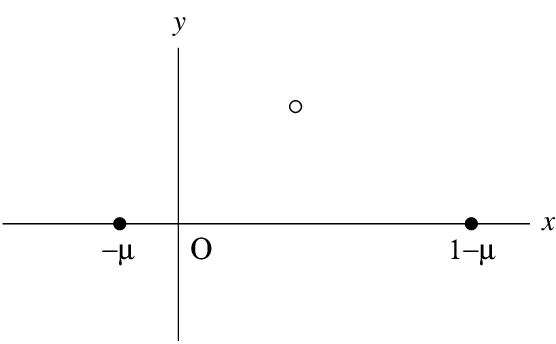}
\caption{Configuration of the circular restricted three-body problem in the rotational frame.
\label{fig:1a}}
\end{figure}

In this paper we study the nonintegrability of the circular restricted three-body problem 
 for the planar case,
\begin{equation}
\begin{split}
&
\dot{x}=p_x+y,\quad
\dot{p}_x=p_y+\frac{\partial U_2}{\partial x}(x,y),\\
&
\dot{y}=p_y-x,\quad
\dot{p}_y=-p_x+\frac{\partial U_2}{\partial y}(x,y),
\end{split}
\label{eqn:pp}
\end{equation}
and for the spatial case,
\begin{equation}
\begin{split}
&
\dot{x}=p_x+y,\quad
\dot{p}_x=p_y+\frac{\partial U_3}{\partial x}(x,y,z),\\
&
\dot{y}=p_y-x,\quad
\dot{p}_y=-p_x+\frac{\partial U_3}{\partial y}(x,y,z),\\
&
\dot{z}=p_z,\quad
\dot{p}_z=\frac{\partial U_3}{\partial z}(x,y,z),
\end{split}
\label{eqn:sp}
\end{equation}
where
\begin{align*}
&
U_2(x,y)=\frac{\mu}{\sqrt{(x-1+\mu)^2+y^2}}+\frac{1-\mu}{\sqrt{(x+\mu)^2+y^2}},\\
&
U_3(x,y,z)=\frac{\mu}{\sqrt{(x-1+\mu)^2+y^2+z^2}}+\frac{1-\mu}{\sqrt{(x+\mu)^2+y^2+z^2}}.
\end{align*}
The systems \eqref{eqn:pp} and \eqref{eqn:sp} are Hamiltonian with the Hamiltonians
\begin{equation*}
H_2(x,y,p_x,p_y)=\tfrac{1}{2}(p_x^2+p_y^2)+(p_xy-p_yx)-U_2(x,y)
%\label{eqn:H2}
\end{equation*}
and
\begin{equation*}
H_3(x,y,z,p_x,p_y,p_z)=\tfrac{1}{2}(p_x^2+p_y^2+p_z^2)+(p_xy-p_yx)-U_3(x,y,z),
%\label{eqn:H3}
\end{equation*}
respectively, and represent the dimensionless equations of motion of the third massless body
  subjected to the gravitational forces from the two primary bodies with mass $\mu$ and $1-\mu$
  which remain at $(1-\mu,0)$ and $(-\mu,0)$, respectively,
  on the $xy$-plane in the rotational frame,
  under the assumption that the primaries rotate counterclockwise
  on the circles about their common center of mass at the origin
  in the inertial coordinate frame.
See Fig.~\ref{fig:1a}.
Their nonintegarbllity means that
 Eq.~\eqref{eqn:pp} (resp. Eq.~\eqref{eqn:sp}) does not have one first integral
 (resp. two first integrals)
 which is (resp. are) functionally independent of the Hamiltonian $H_2$ (resp. $H_3$).
See \cite{A89,M99} for the definition of integrability of general Hamiltonian systems,
 and, e.g., Section~4.1 of \cite{MO17}
 for more details on the derivation and physical meaning of \eqref{eqn:pp} and \eqref{eqn:sp}.

The problem of nonintegrability of \eqref{eqn:pp} and \eqref{eqn:sp}
 is very classical and important in the theory of dynamical systems.
% and has not been solved completely for very long time.
In his famous memoir \cite{P90},
 which was related to a prize competition celebrating the 60th birthday of King Oscar II,
 Henri Poincar\'e studied the planar case
 and discussed the nonexistsnce of a first integral
 which is analytic in the state variables and parameter $\mu$ near $\mu=0$
 and functionally independent of the Hamiltonian.
His approach was improved significantly
 in the first volume of his masterpieces \cite{P92} published two years later:
 he showed the nonexistence of such a first integral for the restricted three-body problem
 in the planar case.
See \cite{B96} for an account of his work
 from mathematical and historical perspectives.
His result was also explained in \cite{AKN06,K83,K96,W37}.
%Thus, it is still an open problem to prove
% whether the three-body problems \eqref{eqn:pp} and \eqref{eqn:sp} are nonintegrable
% for a specific value of $\mu\in(0,1)$.
Moreover, a remarkable progress has been made
 on the planar problem \eqref{eqn:pp} in a different direction recently:
Guardia et al. \cite{GMS16} showed the occurrence of transverse intersection
 between the stable and unstable manifolds of the infinity for any $\mu\in(0,1)$
 in a region far from the primaries
 in which $r=\sqrt{x^2+y^2}$ and its conjugate momentum are sufficiently large.
This implies, e.g., by Theorem~3.10 of \cite{M73}, 
 the real-analytic nonintetgrabilty of \eqref{eqn:pp}
 as well as the existence of oscillatory motions
 such that $\limsup_{t\to\infty}r(t)=\infty$ while $\liminf_{t\to\infty}r(t)<\infty$.
Similar results were obtained much earlier
 when $\mu>0$ is sufficiently small in \cite{LS80}
 and for any $\mu\in(0,1)$ except for a certain finite number of the values in \cite{X92}.
Note that these results immediately say nothing about
 the nonintegrability of the spatial problem \eqref{eqn:sp}.

Moreover, the nonintegrability of the general three-body problem
 is now well understood, in comparison with the restricted one.
Tsygvintsev \cite{T00,T01a} proved the nonintegrability of the general planar three-body problem
 near the Lagrangian parabolic orbits in which the three bodies form an equilateral triangle
 and move along certain parabolas, using Ziglin's method \cite{Z82b}.
Boucher and Weil \cite{BW03} also obtained a similar result,
 using the Morales-Ramis theory \cite{M99,MR01},
 which is considered as an extension of the Ziglin method,
 while it was proven for the case of equal masses a little earlier in \cite{B00}.
Moreover, Tsygvintsev \cite{T01b,T03,T07} proved the nonexistence
 of a single additional first integral near the Lagrangian parabolic orbits
when
\[
\frac{m_1m_2+m_2m_3+m_3m_1}{(m_1+m_2+m_3)^2}\notin\{\tfrac{1}{3},\tfrac{2}{9},\tfrac{8}{27}\},
\]
where $m_j$ represents the mass of the $j$th body for $j=1,2,3$.
Subsequently, Morales-Ruiz and Simon \cite{MS09}
 succeeded in removing the three exceptional cases and extended the result
 to the space of three or more dimensions.
Ziglin \cite{Z00} also proved the nonintegrability of the general three-body problem
 near a collinear solution which was used by Yoshida \cite{Y87}
 for the problem in the one-dimensional space much earlier, in the space of any dimension
 when two of the three masses, say $m_1,m_2$, are nearly equal
 but neither $m_3/m_1$ nor $m_3/m_2\in\{11/12,1/4,1/24\}$.
Maciejewski and Przybylska \cite{MP11}
 discussed the three-body problem with general homogeneous potentials.
It should be noted that
 Ziglin \cite{Z00} and Morales-Ruiz and Simon \cite{MS09}
 also discussed the general $N$-body problem.
We remark that these results say nothing
 about the nonintegrability of the restricted three-body problem
 obtained by limiting manipulation from the general one.
%The former is simpler than the latter
% and may be integrable, especially for a special value of $\mu\in(0,1)$.
%In addition, 
In particular, there exists no nonconstant solution corresponding to the Lagrangian parabolic orbits
 or collinear solutions in the restricted one.

Here we show the nonintegrability
 of the three-body problems \eqref{eqn:pp} and \eqref{eqn:sp}
 near the primaries for any $\mu\in(0,1)$ fixed.
To state our result precisely,
 we use the following treatment originally made in \cite{C13}.
We first introduce the new variables $u_1,u_2\in\Cset$ given by
\[
u_1^2-(x-1+\mu)^2-y^2=0,\quad
u_2^2-(x+\mu)^2-y^2=0
\]
and
\[
u_1^2-(x-1+\mu)^2-y^2-z^2=0,\quad
u_2^2-(x+\mu)^2-y^2-z^2=0
\]
and regard \eqref{eqn:pp} and \eqref{eqn:sp} as Hamiltonian systems
 on the four- and six-dimensional complex manifolds (algebraic varieties)
\begin{align*}
\S_2=&\{(x,y,p_x,p_y,u_1,u_2)\in\Cset^6\\
&\quad\mid
 u_1^2-(x-1+\mu)^2-y^2=u_2-(x+\mu)^2-y^2=0\}
\end{align*}
and
\begin{align*}
\S_3=&\{(x,y,z,p_x,p_y,p_z,u_1,u_2)\in\Cset^8\\
& \quad\mid u_1^2-(x-1+\mu)^2-y^2-z^2=u_2-(x+\mu)^2-y^2-z^2=0\},
\end{align*}
respectively.
Let $\pi_2:\S_2\to\Cset^4$ and $\pi_3:\S_3\to\Cset^6$ be the projections such that
\[
\pi_2(x,y,p_x,p_y,u_1,u_2)=(x,y,p_x,p_y)
\]
and
\[
\pi_3(x,y,z,p_x,p_y,p_z,u_1,u_2)=(x,y,z,p_x,p_y,p_z),
\]
and let 
\begin{align*}
\Sigma(\S_2)=&\{u_1=(x-1+\mu)^2+y^2=0\}\\
&\quad \cup\{u_2=(x+\mu)^2+y^2=0\}\subset\S_2,\\
\Sigma(\S_3)=&\{u_1=(x-1+\mu)^2+y^2+z^2=0\}\\
 &\quad \cup\{u_2=(x+\mu)^2+y^2+z^2=0\}\subset\S_3.
\end{align*}
Note that $\pi_2$ and $\pi_3$ are singular
 on $\Sigma(\S_2)$ and $\Sigma(\S_3)$, respectively.
The sets $\Sigma(\S_2)$ and $\Sigma(\S_3)$
 are called the \emph{critical sets} of  $\S_2$ and $\S_3$, respectively.
The systems \eqref{eqn:pp} and \eqref{eqn:sp} are, respectively, rewritten as
\begin{equation}
\begin{split}
&
\dot{x}=p_x+y,\quad
\dot{p}_x=p_y-\frac{\mu}{u_1^3}(x-1+\mu)-\frac{1-\mu}{u_2^3}(x+\mu),\\
&
\dot{y}=p_y-x,\quad
\dot{p}_y=-p_x-\frac{\mu}{u_1^3}y-\frac{1-\mu}{u_2^3}y,\\
&
\dot{u}_1=\frac{1}{u_1}((x-1+\mu)(p_x+y)+y(p_y-x)),\\
&
\dot{u}_2=\frac{1}{u_2}((x+\mu)(p_x+y)+y(p_y-x))
\end{split}
\label{eqn:ppu}
\end{equation}
and
\begin{equation}
\begin{split}
&
\dot{x}=p_x+y,\quad
\dot{p}_x=p_y-\frac{\mu}{u_1^3}(x-1+\mu)-\frac{1-\mu}{u_2^3}(x+\mu),\\
&
\dot{y}=p_y-x,\quad
\dot{p}_y=-p_x-\frac{\mu}{u_1^3}y-\frac{1-\mu}{u_2^3}y,\\
&
\dot{z}=p_z,\quad
\dot{p}_z=-\frac{\mu}{u_1^3}z-\frac{1-\mu}{u_2^3}z,\\
&
\dot{u}_1=\frac{1}{u_1}((x-1+\mu)(p_x+y)+y(p_y-x)+zp_z),\\
&
\dot{u}_2=\frac{1}{u_2}((x+\mu)(p_x+y)+y(p_y-x)+zp_z)
\end{split}
\label{eqn:spu}
\end{equation}
which are rational on $\S_2$ and $\S_3$.
We prove the following theorem.

\begin{thm}
\label{thm:main}
The circular restricted three-body problem \eqref{eqn:pp} $($resp. \eqref{eqn:sp}$)$
 does not have a complete set of first integrals in involution
 that are functionally independent almost everywhere
 and meromorphic in $(x,y,p_x,p_y,u_1,u_2)$ $($resp. in $(x,y,z,p_x,p_y,p_z,u_1,u_2))$
 except on $\Sigma(\S_2)$ $($resp. on $\Sigma(\S_3))$
 in punctured neighborhoods of
\begin{gather*}
(x,y)=(-\mu,0)\mbox{ and }(1-\mu,0)\\
(\mbox{resp.}\ (x,y,z)=(-\mu,0,0)\mbox{ and }(1-\mu,0,0))
\end{gather*}
for any $\mu\in(0,1)$, as Hamiltonian systems on $\S_2$ $($resp. on $\S_3)$.
\end{thm}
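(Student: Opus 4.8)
The plan is to regard the restricted three-body problem near each of the two primaries as a meromorphic perturbation of the Kepler problem and to apply the nonintegrability criterion for meromorphic perturbations near resonant periodic orbits announced above, after reducing the associated variational equation to a classical hypergeometric equation whose differential Galois group can be computed.

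\textbf{Step 1: reduction to a perturbed Kepler problem.} Fix $\mu\in(0,1)$. First I would use that $(x,y,p_x,p_y)\mapsto(-x,-y,-p_x,-p_y)$ together with $\mu\mapsto1-\mu$ is a symmetry of \eqref{eqn:pp} (and the analogous substitution of \eqref{eqn:sp}) exchanging the two primaries, so that it suffices to treat a neighborhood of the primary at $(x,y)=(1-\mu,0)$ (resp.\ $(x,y,z)=(1-\mu,0,0)$) of mass $\mu$. Translating that primary to the origin by $x=1-\mu+\xi$, $y=\eta$ (resp.\ and $z=\zeta$) gives $U_2=\mu/\sqrt{\xi^2+\eta^2}+V(\xi,\eta)$ with $V$ holomorphic near the origin, and similarly for $U_3$; correspondingly $u_1^2=\xi^2+\eta^2$ (resp.\ $\xi^2+\eta^2+\zeta^2$). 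I would then introduce the blow-up $\xi=\epsilon^2X$, $\eta=\epsilon^2Y$ (resp.\ $\zeta=\epsilon^2Z$), $p_x=\epsilon^{-1}P_X$, $\dots$, $u_1=\epsilon^2U$, $t=\epsilon^3\tau$, with $\epsilon$ a small parameter. In these variables \eqref{eqn:ppu} (resp.\ \eqref{eqn:spu}) becomes $\d w/\d\tau=X_0(w)+\epsilon^3X_1(w;\epsilon)$, where $X_0$ is the Kepler vector field on the variety $\{U^2=X^2+Y^2\}$ (resp.\ $\{U^2=X^2+Y^2+Z^2\}$), $X_1$ is meromorphic in $(w,\epsilon)$ (and carries the Coriolis terms of the rotating frame and the influence of the far primary), and $\epsilon$ is a coordinate on a blow-up of the punctured neighborhood of the primary along $\S_2$ (resp.\ $\S_3$). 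Consequently, a complete set of meromorphic first integrals in involution near the primary as in the statement would produce such a set for the family $X_0+\epsilon^3X_1$ depending meromorphically on $\epsilon$, to which the criterion applies.

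\textbf{Step 2: a resonant periodic orbit and its variational equation.} A negative-energy level of the unperturbed Kepler flow $X_0$ is foliated by periodic orbits; I would take the circular one, $\Gamma$. It is resonant in the sense the criterion requires: in the planar Kepler problem the orbital and epicyclic frequencies coincide, so the variational equation of $X_0$ along $\Gamma$ sits at a $1{:}1$ resonance with non-semisimple monodromy, and in the spatial problem the out-of-plane oscillation frequency likewise equals the orbital one. I would write down the variational equation of the $\epsilon$-suspended system along $\Gamma$: at order $\epsilon^0$ it is the Kepler variational equation (with abelian differential Galois group), while the order-$\epsilon^3$ part is an inhomogeneous linear equation driven by $X_1$ restricted to $\Gamma$. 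Restricting to the energy level, parametrizing $\Gamma$ rationally through $U$, and factoring out the tangential and first-integral directions of $X_0$, one is left with a second-order Fuchsian equation on $\mathbb{P}^1$ whose singular points are the collision, infinity, and the point(s) produced by unfolding the resonance; in the spatial case the $(z,p_z)$-block decouples at the linear level and yields a further equation of the same type.

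\textbf{Step 3: differential Galois obstruction and conclusion.} I would bring this equation to a Riemann $P$-equation (Gauss hypergeometric equation) and compute its exponent differences explicitly as functions of $\mu$. By Kimura's theorem the identity component of its differential Galois group is then non-abelian for every $\mu\in(0,1)$, once one has checked that none of the exceptional, Schwarz-type cases of Kimura's list is attained. By the criterion — which rests on the Ayoul--Zung extensions, to (possibly) non-Hamiltonian systems, of the Morales--Ramis and Morales--Ramis--Sim\'o theorems via the cotangent/Lie-algebra lift — the family $X_0+\epsilon^3X_1$, hence the restricted three-body problem in a punctured neighborhood of the primary along $\S_2$ (resp.\ $\S_3$), then admits no complete set of meromorphic first integrals in involution that are functionally independent almost everywhere and depend meromorphically on $\epsilon$. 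Undoing the blow-up and invoking the symmetry of Step~1 to cover both primaries gives Theorem~\ref{thm:main}.

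\textbf{Main obstacle.} The hard part will be Steps~2--3: producing the reduced variational equation explicitly, recognizing it as a hypergeometric equation, and proving \emph{uniformly} in $\mu\in(0,1)$ that its differential Galois group has non-abelian identity component, i.e.\ ruling out all of Kimura's exceptional cases at once. A more technical, but equally essential, point is in Step~1: one must verify that the blow-up coordinate $\epsilon$ really behaves as a perturbation parameter — that meromorphy on $\S_2$ (resp.\ $\S_3$) near the primary away from the collision is preserved, under the scaling, as joint meromorphy in $(w,\epsilon)$ — so that the hypotheses of the criterion are genuinely met.
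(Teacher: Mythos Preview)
Your plan diverges from the paper's proof at Step~2 and, as written, has a genuine gap. The criterion you invoke (the paper's Theorem~\ref{thm:tool}) does \emph{not} proceed by reducing the variational equation to a hypergeometric equation and applying Kimura's table; its hypothesis (A2) asks for the nonvanishing of the Melnikov-type contour integral $\mathscr{I}^k(\theta)=\D\omega(I^\ast)\!\int_{\gamma_\theta}\!\D_\epsilon^k h(I^\ast,\omega(I^\ast)\tau+\theta;0)\,\d\tau$, and the obstruction is obtained by exhibiting two noncommuting monodromy matrices of the unipotent RVE~\eqref{eqn:rve} (Lemma~\ref{lem:2b}). That RVE is solved by explicit quadratures with fundamental matrix $\Phi^k$; it is not a three-singular-point Fuchsian equation, and there is no place where Kimura's exponent-difference analysis enters. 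Moreover, your choice of the circular Kepler orbit is exactly the case where your proposed parametrization ``through $U$'' collapses (on the circular orbit $u_1$ is constant), and where the inhomogeneous VE you write has constant coefficients with purely trigonometric forcing, producing no finite regular singularities at all.

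There is also an order issue: after the blow-up, the $\epsilon^3$ Coriolis term you single out contributes in Delaunay variables only an action-independent shift $-\epsilon^3 I_2$ (resp.\ $-\epsilon^3 I_3$), so it does \emph{not} move the actions and gives $\mathscr{I}^2\equiv 0$; the first nontrivial obstruction is the $\epsilon^6$ tidal term from the far primary, i.e.\ one must work with the sixth-order RVE ($k=5$). What the paper actually does is pass to Delaunay elements, take an \emph{elliptic} Kepler orbit ($0<e<1$) as the resonant periodic orbit, compute $\mathscr{I}_1^5(\theta)$ explicitly in terms of the true anomaly (Eqs.~\eqref{eqn:ppXi1}--\eqref{eqn:ppXi2}), and show it is nonzero by deforming $\gamma_\theta$ into the complex $t$-plane past the logarithmic singularity at $t=iK_1$ induced by the pole $\phi=\pi+i\arccosh(1/e)$ of $1/(1+e\cos\phi)$; the $\mu$-dependence enters only as the overall nonzero factor $\mu$, so no case-by-case analysis in $\mu$ is needed. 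If you want to salvage your outline, replace the hypergeometric/Kimura step by this contour-integral verification of (A2) along an elliptic orbit; the circular orbit will not do.
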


Proofs of Theorem~\ref{thm:main} are given in Section~3 for the planar case \eqref{eqn:pp}
 and in Section~4 for the spatial case \eqref{eqn:sp}.
Our basic tool of the proofs is a technique developed in Section~2 for
\begin{equation}
\dot{I}=\epsilon h(I,\theta;\epsilon),\quad
\dot{\theta}=\omega(I)+\epsilon g(I,\theta;\epsilon),\quad
(I,\theta)\in\Rset^\ell\times\Tset^m,
\label{eqn:aasys}
\end{equation}
where $\ell,m\in\Nset$, $\Tset^m=(\Rset/2\pi\Zset)^m$, 
 $\epsilon$ is a small parameter such that $0<|\epsilon|\ll 1$,
 and $\omega:\Rset^\ell\to\Rset^m$,
 $h:\Rset^\ell\times\Tset^m\times\Rset\to\Rset^\ell$
 and $g:\Rset^\ell\times\Tset^m\times\Rset\to\Rset^m$
 are meromorphic in their arguments.
The system~\eqref{eqn:aasys} is Hamiltonian if $\epsilon=0$ or
\[
\D_I h(I,\theta;\epsilon)\equiv-\D_\theta g(I,\theta;\epsilon)
\]
as well as $\ell=m$, and non-Hamiltonian if not.
The developed technique enables us to determine
 whether the system \eqref{eqn:aasys} is not meromorphically integrable
 in the Bogoyavlenskij sense \cite{B98} (see Definition~\ref{dfn:1a} below)
 such that the first integrals and commutative vector fields
 also depend meromorphically on $\epsilon$ near $\epsilon=0$,
 like the result of Poincar\'e \cite{P90,P92} stated above,
 when the domains of the independent and dependent variables are extended to regions
 in $\Cset$ and $\Cset^\ell\times(\Cset/2\pi\Zset)^m$, respectively.
The general definition of integrability adopted here is precisely stated as follows.

\begin{dfn}[Bogoyavlenskij]
\label{dfn:1a}
For $n\in\Nset$ an $n$-dimensional dynamical system
\[
\dot{x}=f(x),\quad\mbox{$x\in\Rset^n$ or $\Cset^n$},
\]
is called \emph{$(q,n-q)$-integrable} or simply \emph{integrable} 
 if there exist $q$ vector fields $f_1(x)(:= f(x)),f_2(x),\dots,f_q(x)$
 and $n-q$ scalar-valued functions $F_1(x),\dots,F_{n-q}(x)$ such that
 the following two conditions hold$\,:$
\begin{enumerate}
\setlength{\leftskip}{-1.8em}
\item[\rm(i)]
$f_1(x),\dots,f_q(x)$ are linearly independent almost everywhere
 and commute with each other,
 i.e., $[f_j,f_k](x):=\D f_k(x)f_j(x)-\D f_j(x)f_k(x)\equiv 0$ for $j,k=1,\ldots,q$,
 where $[\cdot,\cdot]$ denotes the Lie bracket$\,;$
\item[\rm(ii)]
The derivatives $\D F_1(x),\dots, \D F_{n-q}(x)$ are linearly independent almost everywhere
 and $F_1(x),\dots,F_{n-q}(x)$ are first integrals of $f_1, \dots,f_q$,
 i.e., $\D F_k(x)\cdot f_j(x)\equiv 0$ for $j=1,\ldots,q$ and $k=1,\ldots,n-q$,
 where ``$\cdot$'' represents the inner product.
\end{enumerate}
We say that the system is \emph{meromorphically} %$($resp. \emph{analytically}$)$
 \emph{integrable}
 if the first integrals and commutative vector fields are meromorphic. % $($resp. analytic$)$. 
\end{dfn}

Definition~\ref{dfn:1a} is considered as a generalization of %complete
 Liouville-integrability for Hamiltonian systems \cite{A89,M99}
 since an $n$-degree-of-freedom Liouville-integrable Hamiltonian system with $n\ge 1$
 has not only $n$ functionally independent first integrals
 but also $n$ linearly independent commutative (Hamiltonian) vector fields
 generated by the first integrals.
When $\epsilon=0$,
 the system~\eqref{eqn:aasys} is meromorphically $(m,\ell)$-integrable
 in the Bogoyavlenskij sense:
 $F_j(I,\theta)=I_j$, $j=1,\ldots,\ell$, are first integrals
 and $f_j(I,\theta)=(0,e_j)\in\Rset^\ell\times\Rset^m$, $j=2,\ldots,m$,
 give $m$ commutative vector fields along with its own vector field,
 where $e_j$ is the $m$-dimensional vector of which the $j$th element is the unit
 and the other ones are zero. 
Conversely, a general $(m,\ell)$-integrable system is transformed
 to the form \eqref{eqn:aasys} with $\epsilon=0$
 if the level set of the first integrals $F_1(x),\ldots,F_m(x)$
 has a connected compact component.
See \cite{B98,Z18} for more details.
Thus, the system \eqref{eqn:aasys} can be regarded
 as a normal form for perturbations of general $(m,\ell)$-integrable systems.

Systems of the form \eqref{eqn:aasys} have attracted much attention,
 especially when they are Hamiltonian.
See \cite{A89,AKN06,K96} and references therein for more details.
In particular, Kozlov \cite{K96} extended the famous result of Poincar\'e \cite{P90,P92}
 for Hamiltonian systems to the general analytic case of \eqref{eqn:aasys}
 and gave sufficient conditions for nonexistence of additional real-analytic first integrals
 depending analytically on $\epsilon$ near $\epsilon=0$.
See also \cite{AKN06,K83} for his result in Hamiltonian systems. 
Moreover, Motonaga and Yagasaki \cite{MY21a} gave sufficient conditions
 for real-analytic nonintegrability of general nearly integrable systems
 in the Bogoyavlenskij sense
 such that the first integrals and commutative vector fields also depend real-analytically
 on $\epsilon$ near $\epsilon=0$.
The technique developed in Section~2 is different from them
 and based on generalized versions due to Ayoul and Zung \cite{AZ10}
 of the Morales-Ramis and Morales-Ramis-Sim\'o theories \cite{M99,MR01,MRS07}.
See Appendix~A of \cite{Y21a}
 for a brief review of the previous results and their comparison with the developed technique.
Our technique can also be applied to several nearly integrable systems
 containing time-periodic perturbation of single-degree-of-freedom Hamiltonian systems
 such as the periodically forced Duffing oscillator and pendulum \cite{MY21b,Y21a}.
Moreover, it can be used directly
 to give a new proof of Poincar\'e's result of \cite{P92}
 on the restricted three-body problem \cite{Y21b}.
The systems \eqref{eqn:pp} and \eqref{eqn:sp} are transformed
 to the form \eqref{eqn:aasys} in the punctured neighborhoods in Theorem~\ref{thm:main}
 and the technique is applied to them.
We emphasize that our results are not just applications of 
 the Morales-Ramis and Morales-Ramis-Sim\'o theories
 or their generalized versions.

\section{Determination of nonintegrability for \eqref{eqn:aasys}}

In this section, we give a technique for determining
 whether the system \eqref{eqn:aasys} is not meromorphically Bogoyavlenskij-integrable
 such that the first integrals and commutative vector fields
 also depend meromorphically on $\epsilon$ near $\epsilon=0$.

\begin{figure}
\includegraphics[scale=0.8]{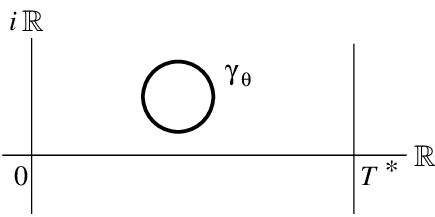}
\caption{Assumption~(A2).\label{fig:2a}}
\end{figure}

When $\epsilon=0$, Eq.~\eqref{eqn:aasys} becomes
\begin{equation}
\dot{I}=0,\quad
\dot{\theta}=\omega(I).
\label{eqn:aasys0}
\end{equation}
We assume the following on the unperturbed system \eqref{eqn:aasys0}:
\begin{enumerate}
\setlength{\leftskip}{-1em}
\item[\bf(A1)]
For some $I^\ast\in\Rset^\ell$, a resonance of multiplicity $m-1$,
\[
\dim_\Qset\langle\omega_1(I^\ast),\ldots,\omega_m(I^\ast)\rangle=1,
\]
occurs with $\omega(I^\ast)\neq 0$,
 i.e., there exists a constant $\omega^\ast>0$ such that 
\[
\frac{\omega(I^\ast)}{\omega^\ast}\in\Zset^m\setminus\{0\},
\]
where $\omega_j(I)$ is the $j$th element of $\omega(I)$ for $j=1,\ldots,m$.
\end{enumerate}
Note that we can replace $\omega^\ast$ with $\omega^\ast/n$ for any $n\in\Nset$ in (A1).
%We take $\omega^\ast$ as large as possible in assumption~(A1).
We refer to the $m$-dimensional torus $\T^\ast=\{(I^\ast,\theta)\mid\theta\in\Tset^m\}$
 as the \emph{resonant torus}
 and to periodic orbits $(I,\theta)=(I^\ast,\omega(I^\ast)t+\theta_0)$, $\theta_0\in\Tset^m$,
 on $\T^\ast$ as the \emph{resonant periodic orbits}.
Let $T^\ast=2\pi/\omega^\ast$.
We also make the following assumption.
\begin{enumerate}
\setlength{\leftskip}{-1em}
\item[\bf(A2)]
For some $k\in\Zset_{\ge 0}:=\Nset\cup\{0\}$ and $\theta\in\Tset^m$ there exists a closed loop $\gamma_\theta$
 in a region including $(0,T^\ast)\subset\Rset$ in $\Cset$ such that
 $\gamma_\theta\cap(i\Rset\cup(T^\ast+i\Rset))=\emptyset$ and
\begin{equation}
\mathscr{I}^k(\theta):=\frac{1}{k!}\D\omega(I^\ast)\int_{\gamma_\theta}
 \D_\epsilon^k h(I^\ast,\omega(I^\ast)\tau+\theta;0)\d\tau
\label{eqn:A2}
\end{equation}
is not zero.
See Fig.~\ref{fig:2a}
\end{enumerate}
Note that the condition $\gamma_\theta\cap(i\Rset\cup(T^\ast+i\Rset))=\emptyset$
 is not essential in (A2), since it always holds
 by replacing $\omega^\ast$ with $\omega^\ast/n$ for sufficiently large $n\in\Nset$ if necessary.
We prove the following theorem
 which guarantees that conditions~(A1) and (A2) are sufficient for nonintegrability of \eqref{eqn:aasys}
 in a certain meaning.

\begin{thm}
\label{thm:tool}
Let $\Gamma$ be any domain in $\Cset/T^\ast\Zset$
 containing $\Rset/T^\ast\Zset$ and $\gamma_\theta$.
Suppose that assumption~{\rm(A1)} and {\rm(A2)} hold for some $\theta=\theta_0\in\Tset^m$.
Then the system \eqref{eqn:aasys} is not meromorphically integrable in the Bogoyavlenskij sense
 near the resonant periodic orbit $(I,\theta)=(I^\ast,\omega(I^\ast)\tau+\theta_0)$
 with $\tau\in\Gamma$
 such that the first integrals and commutative vector fields also depend meromorphically
 on $\epsilon$ near $\epsilon=0$,
 when the domains of the independent and dependent variables are extended to regions
 in $\Cset$ and $\Cset^\ell\times(\Cset/2\pi\Zset)^m$, respectively. 
Moreover, if {\rm(A2)} holds for any $\theta\in\Delta$, where $\Delta$ is a dense set in $\Tset^m$,
 then the conclusion holds for any resonant periodic orbit on the resonant torus $\T^\ast$.
\end{thm}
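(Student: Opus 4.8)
The plan is to argue by contradiction using the non-Hamiltonian Morales-Ramis-Sim\'o theory of Ayoul and Zung \cite{AZ10}. Assume that \eqref{eqn:aasys} is meromorphically Bogoyavlenskij-integrable near the resonant periodic orbit in the stated sense, with the first integrals and commutative vector fields depending meromorphically on $\epsilon$ near $\epsilon=0$. Adjoining the trivial equation $\dot{\epsilon}=0$ turns \eqref{eqn:aasys} into an autonomous meromorphic system of dimension $\ell+m+1$, and a routine check shows that this extended system inherits meromorphic Bogoyavlenskij-integrability near $\epsilon=0$: the given commutative vector fields, extended by a vanishing $\epsilon$-component, still commute and remain independent, the given first integrals remain first integrals of them, and adjoining $\epsilon$ itself supplies the one additional first integral needed. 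When $\epsilon=0$ the curve $\phi_0(\tau)=(I^\ast,\omega(I^\ast)\tau+\theta_0,0)$ is a nonconstant integral curve of the extended system; since $h$ and $g$ are $2\pi$-periodic in $\theta$ and $\omega(I^\ast)T^\ast\in2\pi\Zset^m$ by (A1), restricting the system to $\phi_0$ yields coefficients that descend to meromorphic functions of $\tau\in\Cset/T^\ast\Zset$, so $\phi_0$ is naturally defined over the Riemann surface $\Gamma$.

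I would then invoke the generalized Morales-Ramis-Sim\'o theorem: meromorphic Bogoyavlenskij-integrability of the extended system near $\phi_0$ forces the identity component of the differential Galois group of the $p$-th variational equation $\mathrm{VE}_p$ along $\phi_0$ over $\Gamma$ to be abelian for every $p\ge1$. The first variational equation along $\phi_0$, written in the variations $\zeta$ of $\epsilon$, $\xi$ of $I$ and $\eta$ of $\theta$, reads $\dot{\zeta}=0$, $\dot{\xi}=h(I^\ast,\omega(I^\ast)\tau+\theta_0;0)\zeta$, $\dot{\eta}=\D\omega(I^\ast)\xi+g(I^\ast,\omega(I^\ast)\tau+\theta_0;0)\zeta$; it is strictly lower triangular, so every $\mathrm{VE}_p$ has unipotent, hence connected, Galois group, and it suffices to exhibit one $p$ for which this group is \emph{non-abelian}. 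Observe that $\D\omega(I^\ast)\neq0$ --- otherwise $\mathscr{I}^k\equiv0$ and (A2) fails --- so $\tau$ occurs among the solution components of $\mathrm{VE}_1$. Since it is enough to find one non-abelian $\mathrm{VE}_p$, I may assume that $\mathrm{VE}_{j+1}$ has abelian Galois group for every $j<k$, with $k$ and the loop $\gamma_{\theta_0}$ as in (A2); this is expected to force the monodromies of the lower-order solution components $I_1,\theta_1,\dots,I_k,\theta_k$ of $\mathrm{VE}_k$ to be annihilated by $\D\omega(I^\ast)$, making them sufficiently tame for the step below.

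The heart of the argument is to track two monodromy operators of $\mathrm{VE}_{k+1}$. First, the monodromy $\sigma_0$ along the generating cycle $\Rset/T^\ast\Zset$ of $\Gamma$ shifts the solution component $\tau$ by $T^\ast$. Second, the monodromy $\rho_0$ along $\gamma_{\theta_0}$, which does not wind $\Gamma$ and hence fixes $\tau$, acts on the order-$(k+1)$ component through the equation $\dot{I}_{k+1}=\tfrac{1}{k!}\D_\epsilon^k h(I^\ast,\omega(I^\ast)\tau+\theta_0;0)+(\text{terms built from lower orders})$; consequently $\theta_{k+1}$, which is fed by $\D\omega(I^\ast)I_{k+1}$, picks up along $\gamma_{\theta_0}$ a term $\mathscr{I}^k(\theta_0)\,\tau$, up to contributions lying in the field generated by the solutions of $\mathrm{VE}_k$. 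Since $\sigma_0$ moves $\tau$ by $T^\ast$ while $\rho_0$ produces the coefficient $\mathscr{I}^k(\theta_0)$ multiplying $\tau$, the commutator of $\sigma_0$ and $\rho_0$ acts on $\theta_{k+1}$ by $\pm\mathscr{I}^k(\theta_0)\,T^\ast$, which is nonzero by (A2); hence the monodromy group --- and therefore the Galois group --- of $\mathrm{VE}_{k+1}$ is non-abelian, contradicting the Ayoul-Zung theorem. This proves nonintegrability near the resonant periodic orbit through $\theta_0$. For the last assertion, the set of $\theta_0\in\Tset^m$ for which \eqref{eqn:aasys} is meromorphically integrable near the orbit through $\theta_0$ is open, because the relevant meromorphic first integrals and commutative vector fields near one such orbit also serve for all sufficiently nearby orbits; since this set avoids the dense set $\Delta$, it is empty, so the conclusion holds for every resonant periodic orbit on $\T^\ast$.

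The step I expect to be the main obstacle is the order-$(k+1)$ bookkeeping in the preceding paragraph: one must show that, modulo contributions already realized by the Galois group of $\mathrm{VE}_k$, the monodromy of $\theta_{k+1}$ along $\gamma_{\theta_0}$ is precisely $\mathscr{I}^k(\theta_0)\,\tau$, so that the many product terms $\D_I h\cdot I_k$, $\D_\theta h\cdot\theta_k$, $\D_I\D_\epsilon h\cdot I_{k-1},\dots$ in $\mathrm{VE}_{k+1}$ do not interfere with the commutator. Carrying this out cleanly will likely require an inductive normal-form preparation of \eqref{eqn:aasys} near the resonant torus $\T^\ast$ that uses the vanishing of the lower-order obstructions, together with an explicit description of the Picard--Vessiot extension of $\mathrm{VE}_{k+1}$ as an iterated extension of that of $\mathrm{VE}_k$.
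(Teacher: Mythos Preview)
Your overall strategy---extend by $\dot\epsilon=0$, study higher-order variational equations along the resonant periodic orbit, and exhibit two non-commuting monodromies (one along the real cycle shifting $\tau$ by $T^\ast$, one along $\gamma_{\theta_0}$ producing the coefficient $\mathscr{I}^k(\theta_0)$)---is exactly the paper's, and your density argument for the last sentence is fine. However, the step you flag as the ``main obstacle'' is a real gap in your write-up, and the paper circumvents it entirely by a device you have missed: rather than working with the full $(k{+}1)$th-order variational equation, the paper passes to the \emph{reduced} variational equation obtained by substituting $y^{(1)}=\cdots=y^{(k)}=0$ and $\lambda^{(2)}=\cdots=\lambda^{(k+1)}=0$ into the tower \eqref{eqn:gevk}. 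This specialization annihilates \emph{all} of the cross-terms $\D_I h\cdot I_j$, $\D_\theta h\cdot\theta_j$, $\D_I\D_\epsilon h\cdot I_{j-1},\dots$ that worried you, leaving the single clean linear system
\[
\dot\xi=\tfrac{1}{k!}\D_\epsilon^k h(I^\ast,\omega(I^\ast)\tau+\theta_0;0)\,\lambda,\qquad
\dot\eta=\D\omega(I^\ast)\xi+\tfrac{1}{k!}\D_\epsilon^k g(I^\ast,\omega(I^\ast)\tau+\theta_0;0)\,\lambda,\qquad
\dot\lambda=0.
\]
Because this is a subsystem (a quotient of the linearized flow) of the full $(k{+}1)$th-order VE, the Ayoul--Zung/Morales--Ramis--Sim\'o theorem still forces its Galois group to have abelian identity component under the integrability hypothesis. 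The fundamental matrix and the two monodromies are then computed by a one-line integration, and the commutator is literally $T^\ast\,\D\omega(I^\ast)\hat C_1=T^\ast\mathscr{I}^k(\theta_0)$ in the $(\eta,\lambda)$-entry. No induction on $k$, no assumption that the lower-order VEs are already abelian, and no normal-form preparation are needed.

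So your proposal is not wrong in spirit, but the hard bookkeeping you postpone is never actually carried out, and the inductive hypothesis you invoke (``$\mathrm{VE}_{j+1}$ abelian for $j<k$ forces the lower-order monodromies to be annihilated by $\D\omega(I^\ast)$'') is neither proved nor obviously true. The reduced-VE trick is what turns the argument into a complete proof; you should incorporate it.
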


Our basic idea of the proof of Theorem~\ref{thm:tool}
 is similar to that of Morales-Ruiz \cite{M02},
 who studied time-periodic {\color{black}Hamiltonian} perturbations
 of single-degree-of-freedom Hamiltonian systems
 and showed a relationship of their nonintegrability
 with a version due to Ziglin \cite{Z82a} of the Melnikov method \cite{M63} 
 when the small parameter $\epsilon$ is regarded as a state variable.
Here the Melnikov method enables us
 to detect transversal self-intersection of complex separatrices of periodic orbits
 unlike the standard version \cite{GH83,M63,W03}.
More concretely, under some restrictive conditions,
 he essentially proved that they are meromorphically nonintegrable
 when the small parameter is taken as one of the state variables
 if the Melnikov functions are not identically zero,
 based on a generalized version due to Ayoul and Zung \cite{AZ10}
 of the Morales-Ramis theory \cite{M99,MR01}.
We also use their generalized versions of the Morales-Ramis theory and its extension,
 the Morales-Ramis-Sim\'o theory \cite{MRS07}, to  prove Theorem~\ref{thm:tool}.
These generalized theories enable us to show the nonintegrability of general differential equations
in the Bogayavlenskij sense by using differential Galois groups \cite{CH11,PS03}
 of their variational or higher-order variational equations along nonconstant particular solutions.
We extend the idea of Morales-Ruiz \cite{M02}
 to higher-dimensional non-Hamiltonian systems near periodic orbits.

For the proof of Theorem~\ref{thm:tool},
 we first consider systems of the general form
\begin{equation}
\dot{x}=f(x;\epsilon),\quad
x\in\Cset^n,
\label{eqn:fsys}
\end{equation}
where $f:\Cset^n\times\Cset\to\Cset^n$ is meromorphic,
 and describe direct consequences of the generalized versions due to Ayoul and Zung \cite{AZ10}
 of the Morales-Ramis theory \cite{M99,MR01}
 when the parameter $\epsilon$ is regarded as a state variable in \eqref{eqn:fsys} near $\epsilon=0$.
Let $x=\bar{x}(t)$ be a periodic orbit in the unperturbed system
\begin{equation*}
\dot{x}=f(x;0).
%\label{eqn:fsys0}
\end{equation*}
Taking $\epsilon$ as another state variable, we extend \eqref{eqn:fsys} as
\begin{equation}
\dot{x}=f(x;\epsilon),\quad
\dot{\epsilon}=0,
\label{eqn:efsys}
\end{equation}
in which $(x,\epsilon)=(\bar{x}(t),0)$ is a periodic orbit.
The variational equation (VE) of \eqref{eqn:efsys}
 along the periodic solution $(\bar{x}(t),0)$ is given by
\begin{equation}
\dot{y}=\D_x f(\bar{x}(t);0)y+\D_\epsilon f(\bar{x}(t);0)\lambda,\quad
\dot{\lambda}=0.
\label{eqn:gve1}
\end{equation}
We regard \eqref{eqn:gve1}
 as a linear differential equation on a Riemann surface.
Applying the version due to Ayoul and Zung \cite{AZ10} of the Morales-Ramis theory \cite{M99,MR01}
 to \eqref{eqn:efsys}, we obtain the following result.
\begin{thm}
\label{thm:MR}
If the system \eqref{eqn:fsys} is meromorphically integrable
 in the Bogoyavlenskij sense near $x=\bar{x}(t)$
 such that the first integrals and commutative vector fields
 also depend meromorphically on $\epsilon$ near $\epsilon=0$,
 then the identity component of the differential Galois group of \eqref{eqn:gve1} is commutative.
\end{thm}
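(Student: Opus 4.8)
\emph{Overview.} The plan is to obtain Theorem~\ref{thm:MR} as a direct consequence of the generalized Morales-Ramis theorem of Ayoul and Zung \cite{AZ10}, applied not to \eqref{eqn:fsys} itself but to the extended autonomous system \eqref{eqn:efsys} along the nonconstant periodic solution $(x,\epsilon)=(\bar x(t),0)$. The observation that makes this work is that the hypothesis---that the first integrals and commuting vector fields of \eqref{eqn:fsys} depend meromorphically on $\epsilon$---is exactly what is needed to promote $\epsilon$ to an additional meromorphic first integral, so that Bogoyavlenskij integrability descends from \eqref{eqn:fsys} to \eqref{eqn:efsys} without changing the number of commuting vector fields.

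\emph{Step 1.} Assume \eqref{eqn:fsys} is meromorphically $(q,n-q)$-integrable near $x=\bar x(t)$, with commuting vector fields $f_1(\,\cdot\,;\epsilon)=f(\,\cdot\,;\epsilon),f_2(\,\cdot\,;\epsilon),\dots,f_q(\,\cdot\,;\epsilon)$ and first integrals $F_1(\,\cdot\,;\epsilon),\dots,F_{n-q}(\,\cdot\,;\epsilon)$, all meromorphic in $(x,\epsilon)$ on a neighborhood of the orbit $\{(\bar x(t),0)\}\subset\Cset^n\times\Cset$. I would put $\tilde f_j(x,\epsilon)=(f_j(x;\epsilon),0)$ for $j=1,\dots,q$, $\tilde F_k(x,\epsilon)=F_k(x;\epsilon)$ for $k=1,\dots,n-q$, and $\tilde F_{n-q+1}(x,\epsilon)=\epsilon$, and verify against Definition~\ref{dfn:1a} that these data exhibit \eqref{eqn:efsys} as meromorphically $(q,n+1-q)$-integrable near $(\bar x(t),0)$. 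Indeed: $[\tilde f_j,\tilde f_k]=0$ because the $\epsilon$-components of the $\tilde f_j$ vanish, so the Lie bracket reduces to that of $f_j(\,\cdot\,;\epsilon)$ and $f_k(\,\cdot\,;\epsilon)$ on $\Cset^n$, which is zero; the $\tilde f_j$ are linearly independent almost everywhere because the corresponding $q\times q$ minors, not identically zero in $x$ for generic $\epsilon$, are not identically zero in $(x,\epsilon)$; each $\tilde F_k$ satisfies $\D\tilde F_k\cdot\tilde f_j=\D_x F_k\cdot f_j=0$ and $\epsilon$ is trivially a first integral of every $\tilde f_j$; and $\D\tilde F_1,\dots,\D\tilde F_{n-q},\D\epsilon$ are linearly independent almost everywhere, since the $x$-components $\D_x F_k$ already span an $(n-q)$-dimensional space almost everywhere and $\D\epsilon=(0,\dots,0,1)$ supplies the missing coordinate.

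\emph{Step 2.} Since $\bar x(t)$ is nonconstant, $(\bar x(t),0)$ is a nonstationary integral curve of \eqref{eqn:efsys}, and linearizing $\dot x=f(x;\epsilon)$, $\dot\epsilon=0$ about it via $x=\bar x(t)+y$, $\epsilon=\lambda$ reproduces precisely \eqref{eqn:gve1}. Regarding \eqref{eqn:gve1} as a linear differential equation on the Riemann surface determined by $\bar x(t)$, the Ayoul and Zung version \cite{AZ10} of the Morales-Ramis theorem \cite{M99,MR01}, applied to the meromorphically Bogoyavlenskij-integrable system \eqref{eqn:efsys} along $(\bar x(t),0)$, gives that the identity component of the differential Galois group of \eqref{eqn:gve1} is commutative. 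This is the assertion of Theorem~\ref{thm:MR}.

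\emph{Main obstacle.} The substantive input is \cite{AZ10}; the only step requiring care is Step~1, and within it the organizing remark that one must \emph{not} adjoin $\partial/\partial\epsilon$ as a further commuting vector field---it fails to commute with $\tilde f_1$ unless $\D_\epsilon f\equiv0$---but should instead absorb the extra dimension of $\Cset^{n+1}$ into the new first integral $\epsilon$, which is available precisely because the hypothesis permits meromorphic $\epsilon$-dependence. Beyond that reorganization, everything is routine bookkeeping with Definition~\ref{dfn:1a}; the almost-everywhere independence claims hold because jointly meromorphic data that are nondegenerate for generic $\epsilon$ remain nondegenerate off a proper analytic subset of $\Cset^{n+1}$.
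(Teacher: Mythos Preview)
Your proposal is correct and follows essentially the same approach as the paper: the paper simply states that Theorem~\ref{thm:MR} is obtained by applying the Ayoul--Zung generalization of the Morales--Ramis theory to the extended system \eqref{eqn:efsys} along $(\bar x(t),0)$, and your Step~1 and Step~2 make explicit the verification (left implicit in the paper) that the hypothesis on meromorphic $\epsilon$-dependence yields $(q,n+1-q)$-integrability of \eqref{eqn:efsys} with $\epsilon$ as the extra first integral.
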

See Appendix~A for necessary information on the differential Galois theory.

We can obtain a more general result for \eqref{eqn:efsys} as follows.
For simplicity, we assume that $n=1$. The general case can be treated similarly.
Letting
\[
\bar{f}^{(j,l)}=\D_x^j\D_\epsilon^l f(\bar{x}(t);0),
\]
we express the Taylor series of $f(x;\epsilon)$ about $(x,\epsilon)=(\bar{x}(t),0)$ as
\[
f(x;\epsilon)=\sum_{l=0}^\infty\sum_{j=0}^\infty
 \frac{1}{j! l!}\bar{f}^{(j,l)}(x-\bar{x}(t))^j\epsilon^l.
\]
Let
\[
x=\bar{x}(t)+\sum_{j=1}\epsilon^j y^{(j)}.
\]
Using these expressions, we write the $k$th-order VE
 of \eqref{eqn:efsys} along the periodic orbit $(x,\epsilon)=(\bar{x}(t),0)$ as
\begin{equation}
\begin{split}
\dot{y}^{(1)}=&\bar{f}^{(1,0)}y^{(1)}+\bar{f}^{(0,1)}\lambda^{(1)},\quad
\dot{\lambda}^{(1)}=0,\\
\dot{y}^{(2)}=&\bar{f}^{(1,0)}y^{(2)}+\bar{f}^{(0,1)}\lambda^{(2)}\\
& +\bar{f}^{(2,0)}(y^{(1)})^2+2\bar{f}^{(1,1)}(y^{(1)},\lambda^{(1)})
 +\bar{f}^{(0,2)}(\lambda^{(1)})^2,\quad
\dot{\lambda}^{(2)}=0,\\
\dot{y}^{(3)}=&\bar{f}^{(1,0)}(y^{(3)})+\bar{f}^{(0,1)}(\lambda^{(3)})
 +2\bar{f}^{(2,0)}(y^{(1)},y^{(2)})\\
 &+2\bar{f}^{(1,1)}(y^{(1)},\lambda^{(2)})+2\bar{f}^{(1,1)}(y^{(1)},\lambda^{(2)})
 +2\bar{f}^{(0,2)}(\lambda^{(1)},\lambda^{(2)})\\
& +\bar{f}^{(3,0)}(y^{(1)})^3+3\bar{f}^{(2,1)}((y^{(1)})^2,\lambda^{(1)})+3\bar{f}^{(1,2)}(y^{(1)},(\lambda^{(1)})^2)\\
& +\bar{f}^{(0,3)}(\lambda^{(1)})^3,\quad
\dot{\lambda}^{(3)}=0,\\
& \vdots\\
\dot{y}^{(k)}=&\sum\frac{(j+l)!}{j_1!\cdots j_r! l_1!\cdots l_s!}\\
& \quad\times\bar{f}^{(j,l)}
 ((y^{(i_1)})^{j_1},\ldots,(y^{(i_r)})^{j_r},(\lambda^{(i_1')})^{l_1},\ldots,(\lambda^{(i_s')})^{l_s}),\quad
 \dot{\lambda}^{(k)}=0, 
\end{split}
\label{eqn:gevk}
\end{equation}
where such terms as $(y^{(1)})^0,(\lambda^{(1)})^0=1$ have been substituted
 and the summation in the last equation has been taken over all integers
\[
j,l,r,s,i_1,\ldots,i_r,i_1',\ldots,i_s',j_1,\ldots,j_r,l_1,\ldots,l_s
\]
such that
\begin{align*}
&
1\le j+l\le k,\quad
i_1<i_2<\ldots<i_r,\quad
i_1'<i_2'<\ldots<i_s',\\
&
\sum_{r'=1}^r j_{r'}=j,\quad
\sum_{s'=1}^s l_{s'}=l,\quad
\sum_{r'=1}^r j_{r'}i_{r'}+\sum_{s'=1}^s l_{s'}i_{s'}=k.
\end{align*}
See \cite{MRS07} for the details on derivation of higher-order VEs in a general setting.
Substituting $y^{(j)}=0$, $j=1,\ldots,k-1$, and $\lambda^{(l)}=0$, $l=2,\ldots,k$, into \eqref{eqn:gevk},
 we obtain
\[
\dot{\lambda}^{(1)}=0,\quad
\dot{y}^{(k)}=\bar{f}^{(1,0)}(y^{(k)})+\bar{f}^{(0,k)}(\lambda^{(1)})^k,
\]
which is equivalent to
\begin{equation}
\dot{y}=\D_x f(\bar{x}(t);0)y+\frac{1}{k!}\D_\epsilon^k f(\bar{x}(t);0)\lambda,\quad
\dot{\lambda}=0
\label{eqn:grve}
\end{equation}
with $y=y^{(k)}$ and $\lambda=k!(\lambda^{(1)})^k$.
Equation \eqref{eqn:grve} is derived for $n>1$ similarly.
We regard \eqref{eqn:grve}
 as a linear differential equation on a Riemann surface, again.
Such a reduction of higher-order VEs was used for planar systems in \cite{ALMP18,AY20}.
We call \eqref{eqn:grve} the \emph{$k$th-order reduced variational equation} (RVE)
 of \eqref{eqn:efsys} around the periodic orbit $(x,\epsilon)=(\bar{x}(t),0)$.
Using the version due to Ayoul and Zung \cite{AZ10}
 of the Morales-Ramis-Sim\'o theory \cite{MRS07},
 we obtain the following result.

\begin{thm}
\label{thm:MRS}
If the system \eqref{eqn:fsys} is meromorphically integrable
 in the Bogoyavlenskij sense near $x=\bar{x}(t)$
 such that the first integrals and commutative vector fields
 also depend meromorphically on $\epsilon$ near $\epsilon=0$,
 then the identity component of the differential Galois group of \eqref{eqn:grve} is commutative.
\end{thm}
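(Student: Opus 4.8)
The plan is to deduce Theorem~\ref{thm:MRS} by applying the Ayoul--Zung version \cite{AZ10} of the Morales--Ramis--Sim\'o theory \cite{MRS07} to the extended system \eqref{eqn:efsys} along the nonconstant periodic solution $(x,\epsilon)=(\bar x(t),0)$, in exact parallel with the way Theorem~\ref{thm:MR} follows from their version of the Morales--Ramis theory. The first point to establish is that meromorphic Bogoyavlenskij-integrability of \eqref{eqn:fsys} near $x=\bar x(t)$, with the first integrals and commutative vector fields depending meromorphically on $\epsilon$ near $\epsilon=0$, lifts to meromorphic Bogoyavlenskij-integrability of the $(n+1)$-dimensional system \eqref{eqn:efsys} near $(x,\epsilon)=(\bar x(t),0)$. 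Indeed, if $f_1(x;\epsilon)(=f(x;\epsilon)),f_2(x;\epsilon),\ldots,f_q(x;\epsilon)$ and $F_1(x;\epsilon),\ldots,F_{n-q}(x;\epsilon)$ realize the $(q,n-q)$-integrability of \eqref{eqn:fsys}, then $\tilde f_j(x,\epsilon):=(f_j(x;\epsilon),0)$, $j=1,\ldots,q$ (so that $\tilde f_1$ is the vector field of \eqref{eqn:efsys}), together with $F_1,\ldots,F_{n-q}$ and the additional first integral $\epsilon$ realize the $(q,(n+1)-q)$-integrability of \eqref{eqn:efsys}: one has $[\tilde f_j,\tilde f_k]=([f_j,f_k],0)\equiv 0$, the $\tilde f_j$ remain linearly independent almost everywhere, $\D F_i\cdot\tilde f_j=\D_x F_i\cdot f_j\equiv 0$ and $\D\epsilon\cdot\tilde f_j\equiv 0$, and $\D F_1,\ldots,\D F_{n-q},\D\epsilon$ are linearly independent almost everywhere, all of these objects being meromorphic in $(x,\epsilon)$.

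Granting this, the Ayoul--Zung generalization of the Morales--Ramis--Sim\'o theorem, applied to \eqref{eqn:efsys} along $(\bar x(t),0)$, yields that the identity component of the differential Galois group of the $k$th-order variational equation \eqref{eqn:gevk} is commutative for every $k\in\Nset$. It then remains to pass from \eqref{eqn:gevk} to the $k$th-order RVE \eqref{eqn:grve}. This is exactly what the substitution $y^{(j)}\equiv 0$ $(j=1,\ldots,k-1)$, $\lambda^{(l)}\equiv 0$ $(l=2,\ldots,k)$ performed just before the statement of Theorem~\ref{thm:MRS} accomplishes: it realizes \eqref{eqn:grve} as a ``diagonal block'' of \eqref{eqn:gevk} in the sense used for planar systems in \cite{ALMP18,AY20}, so that the differential module attached to \eqref{eqn:grve} is a subquotient of the one attached to \eqref{eqn:gevk}. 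By functoriality of the differential Galois group, the differential Galois group of \eqref{eqn:grve} is then a quotient of that of \eqref{eqn:gevk}; since the identity component of a quotient of a linear algebraic group $G$ is the image of $G^{\circ}$, it is commutative whenever $G^{\circ}$ is. Hence the identity component of the differential Galois group of \eqref{eqn:grve} is commutative, as asserted.

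The lifting of integrability to \eqref{eqn:efsys} and the appeal to the Ayoul--Zung theorem are routine, following the template already in place for Theorem~\ref{thm:MR}. The step that needs care --- and the main obstacle --- is the reduction: one must justify the ``diagonal block'' claim rigorously, i.e.\ exhibit an explicit chain of differential submodules of \eqref{eqn:gevk} witnessing \eqref{eqn:grve} as a subquotient, so that the functoriality statement for the differential Galois group genuinely applies. This amounts to careful bookkeeping with the tower of higher-order variational equations, but it is where the content beyond Theorem~\ref{thm:MR} resides.
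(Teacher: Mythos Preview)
Your proposal is correct and follows essentially the same route as the paper. The paper does not spell out a proof of Theorem~\ref{thm:MRS} beyond the sentence ``Using the version due to Ayoul and Zung \cite{AZ10} of the Morales-Ramis-Sim\'o theory \cite{MRS07}, we obtain the following result,'' together with the preceding derivation showing that the substitution $y^{(j)}=0$, $\lambda^{(l)}=0$ reduces \eqref{eqn:gevk} to \eqref{eqn:grve}; your write-up makes explicit the two ingredients the paper leaves implicit --- lifting Bogoyavlenskij integrability from \eqref{eqn:fsys} to \eqref{eqn:efsys}, and recognizing \eqref{eqn:grve} as a subquotient of \eqref{eqn:gevk} so that commutativity of the identity component descends --- and you correctly flag the subquotient step as the place where the bookkeeping must be done carefully.
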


\begin{rmk}
The statement of Theorem~$\ref{thm:MRS}$ is very weak,
 compared with the original one of {\rm\cite{MRS07}},
 since the RVE \eqref{eqn:grve} is much smaller than the full higher-order VE for \eqref{eqn:fsys}.
However, it is tractable and enough for our purpose.
\end{rmk}

\begin{figure}
\includegraphics[scale=0.7]{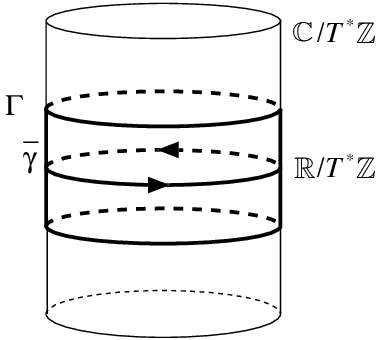}
\caption{Riemann surface $\Gamma$.
The monodromy matrix $M_\gamma$ is computed along the loop $\gamma$.
\label{fig:2b}}
\end{figure}

We return to the system \eqref{eqn:aasys} and regard $\epsilon$ as a state variable
 to rewrite it as
\begin{equation}
\dot{I}=\epsilon h(I,\theta;\epsilon),\quad
\dot{\theta}=\omega(I)+\epsilon g(I,\theta;\epsilon),\quad
\dot{\epsilon}=0,
\label{eqn:aasyse}
\end{equation}
like \eqref{eqn:efsys}.
We extend the domain of the independent variable $t$ to a region including $\Rset$ in $\Cset$,
 as stated in Theorem~\ref{thm:tool}.
The $(k+1)$th-order RVE of \eqref{eqn:aasyse}
 along the periodic orbit $(I,\theta,\epsilon)=(I^\ast,\omega(I^\ast)t+\theta_0,0)$ is given by
\begin{equation}
\begin{split}
&
\dot{\xi}=h^k(I^\ast,\omega^\ast t+\theta_0;0)\lambda,\\
&
\dot{\eta}=\D\omega(I^\ast)\xi+g^k(I^\ast,\omega^\ast t+\theta_0;0)\lambda,\\
&
\dot{\lambda}=0,
\end{split}
\quad
(\xi,\eta,\chi)\in\Cset^\ell\times\Cset^m\times\Cset.
\label{eqn:rve}
\end{equation}
where
\[
h^k(I,\theta)=\frac{1}{k!}\D_\epsilon^k h(I,\theta;0),\quad
g^k(I,\theta)=\frac{1}{k!}\D_\epsilon^k g(I,\theta;0).
\]
As a Riemann surface, we take any region $\Gamma$ in $\Cset/T^\ast\Zset$
 such that the closed loop $\gamma_\theta$ in assumption~(A2)
 as well as $\Rset/T^\ast\Zset$ is contained in $\Gamma$, as in Theorem~\ref{thm:tool}.
See Fig.~\ref{fig:2b}.
Let $\Kset_\theta\neq\Cset$ be a differential field that consists of $T^\ast$-periodic functions
 and contains the elements of $h^k(I^\ast,\omega(I^\ast)t+\theta)$
 and $g^k(I^\ast,\omega(I^\ast)t+\theta)$ with $t\in\Gamma$.
We regard the $(k+1)$th-order RVE \eqref{eqn:rve} as a linear differential equation
 over $\Kset_\theta$ on the Riemann surface $\Gamma$.
We obtain a fundamental matrix of \eqref{eqn:rve} as
\begin{equation*}
\Phi^k(t;\theta_0)=
\begin{pmatrix}
\id_\ell & 0 & \Xi^k(t;\theta_0)\\
\D\omega(I^\ast)t & \id_m & \Psi^k(t;\theta_0)\\
0 & 0 & 1
\end{pmatrix},
%\label{eqn:fm}
\end{equation*}
where $\mathrm{id}_\ell$ is the $\ell\times\ell$ identity matrix and
\begin{align*}
\Xi^k(t;\theta)
=&\int_0^t h^k(I^\ast,\omega(I^\ast)\tau+\theta)\d\tau,\\
%\label{eqn:Xi}
\Psi^k(t;\theta)
=&\int_0^t\bigl(\D\omega(I^\ast)\Xi(\tau;\theta)
 +g^k(I^\ast,\omega(I^\ast)\tau+\theta)\bigr)\d\tau.
\end{align*}

Let $\G_\theta$ be the differential Galois group of \eqref{eqn:rve}
 and let $\sigma\in\G_\theta$.
Then
\[
\frac{\d}{\d t}\sigma(\Xi^k(t;\theta))=\sigma\left(\frac{\d}{\d t}\Xi^k(t;\theta)\right)
=h^k(I^\ast,\omega(I^\ast)t+\theta),
\]
so that
\begin{equation}
\sigma(\Xi^k(t;\theta))=\int_0^t h^k(I^\ast,\omega(I^\ast)\tau+\theta;0)\d\tau+C
=\Xi^k(t;\theta)+C,
\label{eqn:rmk2a}
\end{equation}
where $C$ is a constant $\ell$-dimensional vector depending on $\sigma$.
If $\Xi^k(t;\theta)\in\Kset_\theta$, then $C=0$ for any $\sigma\in\G_\theta$.
Similarly, we have
\[
\sigma(\D\omega(I^\ast)t)=\D\omega(I^\ast)t+C',
\]
where $C'$ is a constant $m\times\ell$ matrix depending on $\sigma$.
If $\D\omega(I^\ast)\neq 0$, then $C'\neq 0$ for some $\sigma\in\G_\theta$
 since $\D\omega(I^\ast)t\notin \Kset_\theta$.
However,
\begin{align*}
\frac{\d}{\d t}\sigma(\Psi^k(t;\theta))
=&\sigma\left(\frac{\d}{\d t}\Psi^k(t;\theta)\right)
=\sigma\bigl(\D\omega(I^\ast)\Xi^k(t;\theta)
+g^k(I^\ast,\omega(I^\ast)t+\theta)\bigr)\\
=&\D\omega(I^\ast)\Xi^k(t;\theta)+g^k(I^\ast,\omega(I^\ast)t+\theta)
 +\D\omega(I^\ast)C.
\end{align*}
Hence,
\[
\sigma(\Psi^k(t;\theta))=\Psi^k(t;\theta)+\D\omega(I^\ast)Ct+C'',
\]
where $C''$ is a constant $m$-dimensional vector depending on $\sigma$.
If $\Xi^k(t;\theta),\Psi^k(t;\theta)\in\Kset_\theta$,
 then $C''=0$ for any $\sigma\in\G_\theta$.
Thus, we see that
\[
\G_\theta\subset\tilde{\G}:=\{
M(C_1,C_2,C_3)\mid C_1\in\Cset^\ell,C_2\in\Cset^m,C_3\in\Cset^{m\times\ell}\},
\]
where
\[
M(C_1,C_2,C_3)=
\begin{pmatrix}
\id_\ell & 0 & C_1\\
C_3 & \id_m & C_2\\
0 & 0 & 1
\end{pmatrix}.
\]

\begin{proof}[Proof of Theorem~$\ref{thm:tool}$]
Assume that the hypotheses of the theorem hold.
We fix $\theta\in\Tset^m$
 such that the integral \eqref{eqn:A2} is not zero.
We continue the fundamental matrix $\Phi^k(t;\theta)$ analytically
 along the loop $\gamma=\gamma_\theta$ 
 to obtain the monodromy matrix as
\begin{equation}
M_{\gamma}=
\begin{pmatrix}
\id_\ell & 0 & \hat{C}_1\\
0 & \id_m &  \hat{C}_2\\
0 & 0 & 1
\end{pmatrix},
\label{eqn:Mgamma}
\end{equation}
where
\begin{align*}
&
\hat{C}_1=\int_{\gamma_\theta}h^k(I^\ast,\omega(I^\ast)t+\theta;0)\d\tau,\\
&
\hat{C}_2=\int_{\gamma_\theta}\bigl(\D\omega(I^\ast)\Xi^k(\tau;\theta)
+g^k(I^\ast,\omega(I^\ast)\tau+\theta)\bigr)\d\tau.
\end{align*}
See Appendix~B for basic information on monodromy matrices.
In particular, we have $M_\gamma\in\G_\theta$.
Note that $\D\omega(I^\ast)\hat{C}_1\neq 0$ by assumption~(A2).

Let $\bar{\gamma}=\{T^\ast s\mid s\in[0,1]\}$,
 which is also a closed loop on the Riemann surface $\Gamma$
 (see Fig.~\ref{fig:2b}).
We continue $\Phi^k(t;\theta)$ analytically along the loop $\bar{\gamma}$
 to obtain the monodromy matrix as
\[
M_{\bar{\gamma}}=
\begin{pmatrix}
\id_\ell & 0 & \Xi^k(T^\ast;\theta)\\
\D\omega(I^\ast)T^\ast & \id_m & \Psi^k(T^\ast;\theta)\\
0 & 0 & 1
\end{pmatrix}.
\]
Let $\bar{C}_1=\Xi^k(T^\ast;\theta)$, $\bar{C}_2=\Psi^k(T^\ast;\theta)$
 and $\bar{C}_3=\D\omega(I^\ast)T^\ast$.
We see that $M_{\bar{\gamma}}=M(\bar{C}_1,\bar{C}_2,\bar{C}_3)\in\G_\theta$
 and $\bar{C}_3\hat{C}_1\neq 0$ by $\D\omega(I^\ast)\hat{C}_1\neq 0$.

\begin{lem}
\label{lem:2b}
Suppose that $M(C_1,C_2,C_3),M(C_1',C_2',C_3')\in\G_\theta$
 for some $C_j,C_j'$, $j=1,2,3$, with $C_3C_1'\neq C_3'C_1$.
Then the identity component $\G_\theta^0$ of $\G_\theta$ is not commutative.
\end{lem}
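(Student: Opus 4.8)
The plan is to reduce the statement to an explicit computation inside the matrix group $\tilde{\G}$, which by the discussion above contains $\G_\theta$. First I would record the group law in $\tilde{\G}$: multiplying out the block matrices gives
\[
M(C_1,C_2,C_3)\,M(C_1',C_2',C_3')=M\bigl(C_1+C_1',\,C_2+C_2'+C_3C_1',\,C_3+C_3'\bigr),
\]
whence $M(C_1,C_2,C_3)^{-1}=M(-C_1,\,-C_2+C_3C_1,\,-C_3)$, and a second short computation yields the commutator identity
\[
\bigl[M(C_1,C_2,C_3),\,M(C_1',C_2',C_3')\bigr]=M\bigl(0,\,C_3C_1'-C_3'C_1,\,0\bigr),
\]
which, notably, does not involve the middle entries $C_2,C_2'$ at all. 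Since $M(0,v,0)=\id$ precisely when $v=0$, this already shows that $M(C_1,C_2,C_3)$ and $M(C_1',C_2',C_3')$ fail to commute in $\G_\theta$ under the hypothesis $C_3C_1'\neq C_3'C_1$.

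The only genuine issue is that $\G_\theta$ need not be connected, so I cannot simply say that the two given generators $A:=M(C_1,C_2,C_3)$ and $B:=M(C_1',C_2',C_3')$ lie in the identity component $\G_\theta^0$. To handle this I would use that a linear algebraic group has only finitely many connected components; set $r:=[\G_\theta:\G_\theta^0]\in\Nset$. By Lagrange's theorem in the finite quotient group $\G_\theta/\G_\theta^0$ we get $A^r,B^r\in\G_\theta^0$. The key point is that in the group law above the first and third entries are simply additive, so $A^r=M(rC_1,\ast,rC_3)$ and $B^r=M(rC_1',\ast,rC_3')$ with irrelevant middle entries; feeding this into the commutator identity gives
\[
\bigl[A^r,B^r\bigr]=M\bigl(0,\,r^2(C_3C_1'-C_3'C_1),\,0\bigr),
\]
which is $\neq\id$ because $C_3C_1'\neq C_3'C_1$ and $r\geq1$. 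As $A^r,B^r\in\G_\theta^0$ and their commutator again lies in $\G_\theta^0$ but is nontrivial, $\G_\theta^0$ is not commutative, as claimed.

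The computations are routine matrix algebra; the one place that needs care is checking that the passage from $A,B$ to the powers $A^r,B^r$ does not destroy non-commutation — this is exactly why it matters that the first and third slots add linearly, so that the obstruction $C_3C_1'-C_3'C_1$ is merely rescaled by $r^2\neq0$ rather than collapsing. An alternative to the power trick would be to invoke that the derived subgroup of $\G_\theta$ is Zariski closed and connected, hence contained in $\G_\theta^0$, and already contains the nontrivial element $[A,B]=M(0,C_3C_1'-C_3'C_1,0)$; but the elementary argument above avoids that machinery.
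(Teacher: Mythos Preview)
Your argument is correct and close in spirit to the paper's, but the passage from $\G_\theta$ to $\G_\theta^0$ is handled differently. After checking non-commutation of the two given matrices, the paper computes all powers $M(C_1,C_2,C_3)^k$ and, using that $\G_\theta$ is Zariski closed and $\G_\theta^0$ has finite index, concludes that the entire one-parameter family $\{M(cC_1,\ast,cC_3):c\in\Cset\}$ lies in $\G_\theta^0$ (and likewise for the primed triple); non-commutativity of $\G_\theta^0$ then follows by applying the product formula to members of these two curves. Your Lagrange-power trick is more elementary: rather than producing continuous families inside $\G_\theta^0$, you place the two specific elements $A^r,B^r$ there and compute their commutator directly, exploiting the additivity of the first and third slots so that the obstruction $C_3C_1'-C_3'C_1$ is merely rescaled by $r^2\neq0$. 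This sidesteps any Zariski-closure reasoning. As a side remark, the paper's displayed middle entry of $M(C_1,C_2,C_3)^k$ is slightly off --- it should read $kC_2+\binom{k}{2}C_3C_1$ rather than $kC_2+(k-1)C_3C_1$ --- though this is immaterial to either argument since only the outer entries enter the commutator.
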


\begin{proof}
Assume that the hypothesis holds.
We easily see that $M(C_1,C_2,C_3)$ and $M(C_1',C_2',C_3')$ is not commutative since
\[
M(C_1,C_2,C_3)M(C_1',C_2',C_3)
=\begin{pmatrix}
\id_\ell & 0 & C_1+C_1'\\
C_3+C_3' & \id_m & C_3C_1'+C_2+C_2'\\
0 & 0 & 1
\end{pmatrix}
\]
while
\[
M(C_1',C_2',C_3)M(C_1',C_2',C_3)
=\begin{pmatrix}
\id_\ell & 0 & C_1+C_1'\\
C_3+C_3' & \id_m & C_3'C_1+C_2+C_2'\\
0 & 0 & 1
\end{pmatrix}.
\]
However, we compute
\begin{align*}
M(C_1,C_2,C_3)^2
=&\begin{pmatrix}
\id_\ell & 0 & C_1\\
C_3 & \id_m & C_2\\
0 & 0 & 1
\end{pmatrix}\begin{pmatrix}
\id_\ell & 0 & C_1\\
C_3 & \id_m & C_2\\
0 & 0 & 1
\end{pmatrix}\\
=&\begin{pmatrix}
\id_\ell & 0 & 2C_1\\
2C_3 & \id_m & C_3C_1+2C_2\\
0 & 0 & 1
\end{pmatrix}
\end{align*}
and easily show by induction that
\[
M(C_1,C_2,C_3)^k
=\begin{pmatrix}
\id_\ell & 0 & kC_1\\
kC_3 & \id_m & (k-1)C_3C_1+kC_2\\
0 & 0 & 1
\end{pmatrix}
\]
for any $k\in\Nset$.
Since $\G_\theta^0$ is a subgroup of finite index in $\G_\theta$ (see Appendix~A),
 we show that
\[
\G_\theta^0\supset\{M(cC_1,(c-1)C_3C_1+cC_2,cC_3)\mid c\in\Cset\}
\]
if $C_3C_1+C_2\neq 0$ and
\[
\G_\theta^0\supset\{M(cC_1,C_2,cC_3)\mid c\in\Cset\}
\]
if $C_3C_1+C_2=0$.
Thus, we show that $\G_\theta^0$ is not commutative in both cases.
\end{proof}

By Lemma~\ref{lem:2b},
 the identity component $\G_\theta^0$ is not commutative.
Applying Theorem~\ref{thm:MRS},
 we see that the system \eqref{eqn:aasys} is meromorphically nonintegrable
 near the resonant periodic orbit $(I^\ast,\omega(I^\ast)t+\theta)$
 in the meaning of Theorem~\ref{thm:tool}.
If this statement holds for $\theta$ on a dense set $\Delta\subset\Tset^m$,
 then so does it  on $\Tset^m$.
Thus, we complete the proof.
\end{proof}

\begin{rmk}\
\label{rmk:2a}
\begin{enumerate}
\setlength{\leftskip}{-1.6em}
\item[\rm(i)]
When the system \eqref{eqn:aasys} is Hamiltonian,
 it is not meromorphically Liouville-integrable
 such that the first integrals also depend meromorphically on $\epsilon$ near $\epsilon=0$,
 if the hypotheses of Theorem~$\ref{thm:tool}$ hold.
\item[\rm(ii)]
Assumption {\rm(A2)} in Theorem~$\ref{thm:tool}$ may be replaced with\\[-1ex]
\begin{enumerate}
\setlength{\leftskip}{-0.6em}
\item[\bf(A2')]
For some $k\in\Zset_{\ge 0}$ and $\theta\in\Tset^m$
\[
\D\omega(I^\ast)\Xi^k(t;\theta)\notin\Kset_\theta(t).
\]
\end{enumerate}
This is easily proven as follows.
Let $\Lset$ be the Picard-Vessiot extension of \eqref{eqn:rve}
 and let $\hat{\sigma}:\Lset\to\Lset$ be a $\Kset_\theta(t)$-automorphism, i.e.,
 $\hat{\sigma}\in\mathrm{Gal}(\Lset/\Kset_\theta(t))\subset\G_\theta$
 $($see Appendix~{\rm A)}.
Since $\Xi^k(t;\theta)\notin\Kset_\theta(t)$, we have
\[
%\hat{\sigma}(\D\omega(I^\ast)t)=\D\omega(I^\ast)t,\quad
\hat{\sigma}(\Xi^k(t;\theta))=\Xi^k(t;\theta)+\hat{C}_1
\]
as in \eqref{eqn:rmk2a},
 so that $\hat{\sigma}$ corresponds to the matrix
\[
\begin{pmatrix}
\id_\ell & 0 & \hat{C}_1\\
0 & \id_m & \hat{C}_2\\
0 & 0 & 1 
\end{pmatrix}=M(\hat{C}_1,\hat{C}_2,0).
\]
Since $\D\omega(I^\ast)\hat{C}_1\neq 0$
 for some $\hat{\sigma}\in\mathrm{Gal}(\Lset/\Kset_\theta(t))$,
 we only have to use the above matrix instead of \eqref{eqn:Mgamma}
 and apply the same arguments to obtain the desired result.
\end{enumerate}
\end{rmk}

\section{Planar Case}
We prove Theorem~\ref{thm:main} for the planar case \eqref{eqn:pp}.
We only consider a neighborhood of $(x,y)=(-\mu,0)$
  since we only have to replace $x$ and $\mu$ with $-x$ and $1-\mu$
  to obtain the result for a neighborhood of  $(1-\mu,0)$.
We introduce a small parameter $\epsilon$ such that $0<\epsilon\ll 1$.
Letting
\[
\epsilon^2\xi=x+\mu,\quad
\epsilon^2\eta=y,\quad
\epsilon^{-1}p_\xi=p_x,\quad
\epsilon^{-1}p_\eta=p_y+\mu
\]
and scaling the time variable $t\to\epsilon^3 t$, we rewrite \eqref{eqn:pp} as
\begin{align*}
&
\dot{\xi}=p_\xi+\epsilon^3\eta,\quad
\dot{p}_\xi=-\frac{(1-\mu)\xi}{(\xi^2+\eta^2)^{3/2}}
 +\epsilon^3 p_\eta-\epsilon^4\mu
 -\epsilon^4\frac{\mu(\epsilon^2\xi-1)}{((\epsilon^2\xi-1)^2+\epsilon^4\eta^2)^{3/2}},\\
&
\dot{\eta}=p_\eta-\epsilon^3\xi,\quad
\dot{p}_\eta=-\frac{(1-\mu)\eta}{(\xi^2+\eta^2)^{3/2}}
 -\epsilon^3 p_\xi-\epsilon^6\frac{\mu\eta}{((\epsilon^2\xi-1)^2+\epsilon^4\eta^2)^{3/2}},
\end{align*}
or up to the order of $\epsilon^6$,
\begin{equation}
\begin{split}
&
\dot{\xi}=p_\xi+\epsilon^3\eta,\quad
\dot{p}_\xi=-\frac{(1-\mu)\xi}{(\xi^2+\eta^2)^{3/2}}
 +\epsilon^3 p_\eta+2\epsilon^6\mu\xi,\\
%+\frac{3}{2}\epsilon^8\mu(2\xi^2-\eta^2),\\
&
\dot{\eta}=p_\eta-\epsilon^3\xi,\quad
\dot{p}_\eta=-\frac{(1-\mu)\eta}{(\xi^2+\eta^2)^{3/2}}-\epsilon^3 p_\xi
 -\epsilon^6\mu\eta,
%+3\epsilon^8\mu\xi\eta,
\end{split}
\label{eqn:pped}
\end{equation}
where the $O(\epsilon^8)$ terms have been eliminated.
Equation~\eqref{eqn:pped} is a Hamiltonian system with the Hamiltonian
\begin{equation}
H=\tfrac{1}{2}(p_\xi^2+p_\eta^2)-\frac{1-\mu}{\sqrt{\xi^2+\eta^2}}
 +\epsilon^3(\eta p_\xi-\xi p_\eta)-\tfrac{1}{2}\epsilon^6\mu(2\xi^2-\eta^2).
%-\tfrac{1}{2}\epsilon^6\mu(2\xi^2+\eta^2)
\label{eqn:H1}
\end{equation}
Nonintegrability of a system
 which is similar to \eqref{eqn:pped} but does not contain a small parameter 
 was proven by using the Morales-Ramis theory \cite{M99,MR01} in \cite{MSS05}.
See also Remark~\ref{rmk:3a}(ii).

We next rewrite \eqref{eqn:H1} in the polar coordinates.
Let
\[
\xi=r\cos\phi,\quad
\eta=r\sin\phi.
\]
The momenta $(p_r,p_\phi)$ corresponding to $(r,\phi)$ satisfy
\[
p_\xi=p_r\cos\phi-\frac{p_\phi}{r}\sin\phi,\quad
p_\eta=p_r\sin\phi+\frac{p_\phi}{r}\cos\phi.
\]
See, e.g., Section~8.6.1 of \cite{MO17}.
The Hamiltonian becomes
\begin{equation*}
H=\tfrac{1}{2}\left(p_r^2+\frac{p_\phi^2}{r^2}\right)-\frac{1-\mu}{r}-\epsilon^3 p_\phi
 -\tfrac{1}{4}\epsilon^6\mu r^2(3\cos 2\phi+1).
%\label{eqn:pppcH}
\end{equation*}
%So we obtain the Hamiltonian system as
%\begin{equation}
%\begin{split}
%&
%\dot{r}=p_r,\quad
%\dot{p}_r=\frac{p_\phi^2}{r^3}-\frac{1-\mu}{r^2}-\epsilon^4\mu\cos\phi,\\
%&
%\dot{\phi}=\frac{p_\phi}{r^2}-\epsilon,\quad
%\dot{p}_\phi=\epsilon^4\mu r\sin\phi.
%\end{split}
%\label{eqn:ppep}
%\end{equation}
Up to $O(1)$, the corresponding Hamiltonian system becomes
\begin{equation}
\dot{r}=p_r,\quad
\dot{p}_r=\frac{p_\phi^2}{r^3}-\frac{1-\mu}{r^2},\quad
\dot{\phi}=\frac{p_\phi}{r^2},\quad
\dot{p}_\phi=0,
\label{eqn:pp0p}
\end{equation}
which is easily solved since $p_\phi$ is a constant.
Let $u=1/r$.
From \eqref{eqn:pp0p} we have
\[
\frac{\d^2u}{\d\phi^2}+u=\frac{1-\mu}{p_\phi^2},
\]
from which we obtain the relation
\begin{equation}
r=\frac{p_\phi^2}{(1-\mu)(1+e\cos\phi)},
\label{eqn:ppr}
\end{equation}
where the position $\phi=0$ is appropriately chosen and $e$ is a constant.
We choose $e\in(0,1)$, so that Eq.~\eqref{eqn:ppr} represents an elliptic orbit with the eccentricity $e$.
Moreover, its period is given by
\begin{equation}
T=\frac{p_\phi^3}{(1-\mu)^2}\int_0^{2\pi}\frac{\d\phi}{(1+e\cos\phi)^2}
 =\frac{2\pi p_\phi^3}{(1-\mu)^2(1-e^2)^{3/2}}.
\label{eqn:T}
\end{equation}

Now we introduce  the Delaunay elements obtained from the generating function
\begin{equation}
W(r,\phi,I_1,I_2)=I_2\phi+\chi(r,I_1,I_2),
\label{eqn:W}
\end{equation}
where
\begin{align}
\chi(r,I_1,I_2)
=&\int_{r_-}^r\left(\frac{2(1-\mu)}{\rho}-\frac{(1-\mu)}{I_1^2}
 -\frac{I_2^2}{\rho^2}\right)^{1/2}\d\rho\notag\\
=&
 -2I_1^2\arcsin\sqrt{\frac{r_+-r}{r_+-r_-}}
 +\sqrt{(r_+-r)(r-r_-)}\notag\\
&
 +\frac{2I_1I_2}{\sqrt{1-\mu}}\arctan\sqrt{\frac{r_-(r_+-r)}{r_+(r-r_-)}}
\label{eqn:chi}
\end{align}
with
\[
r_\pm=I_1\left(I_1\pm\sqrt{I_1^2-\frac{I_2^2}{1-\mu}}\right)
\]
(see, e.g., Section~8.9.1 of \cite{MO17}).
%Here the upper and lower signs are taken when $p_r$ is positive and negative, respectively. 
We have
\begin{align*}
&
p_r=\frac{\partial W}{\partial r}
=\frac{\partial\chi}{\partial r}(r,I_1,I_2),\quad
p_\phi=\frac{\partial W}{\partial\phi}
=I_2,\\
&
\theta_1=\frac{\partial W}{\partial I_1}=\chi_1(r,I_1,I_2),\quad
\theta_2=\frac{\partial W}{\partial I_2}=\phi+\chi_2(r,I_1,I_2),
%\label{eqn:ppD}
\end{align*}
where
\begin{align*}
\chi_1(r,I_1,I_2)=\frac{\partial\chi}{\partial I_1}(r,I_1,I_2),\quad
\chi_2(r,I_1,I_2)=\frac{\partial\chi}{\partial I_2}(r,I_1,I_2).
\end{align*}
Since the transformation from $(r,\phi,p_r,p_\phi)$ to $(\theta_1,\theta_2,I_1,I_2)$ is symplectic,
 the transformed system is also Hamiltonian and its Hamiltonian is given by
\begin{align*}
H=&-\frac{(1-\mu)}{2I_1^2}-\epsilon^3 I_2\notag\\
& -\tfrac{1}{4}\epsilon^6\mu R(\theta_1,I_1,I_2)^2
 (3\cos 2(\theta_2-\chi_2(R(\theta_1,I_1,I_2),I_1,I_2))+1),
%\label{eqn:ppDH}
\end{align*}
where $r=R(\theta_1,I_1,I_2)$ is the $r$-component of the symplectic transformation satisfying
\begin{equation}
\theta_1=\chi_1(R(\theta_1,I_1,I_2),I_1,I_2).
\label{eqn:R}
\end{equation}
Thus, we obtain the Hamiltonian system 
\begin{equation}
\begin{split}
\dot{I}_1=&
 \tfrac{1}{2}\epsilon^6\mu\frac{\partial R}{\partial\theta_1}(\theta_1,I_1,I_2)R(\theta_1,I_1,I_2)
 \biggl(3\cos 2(\theta_2-\chi_2(R(\theta_1,I_1,I_2),I_1,I_2))\\
&\quad
+1+3R(\theta_1,I_1,I_2)\frac{\partial\chi_2}{\partial r}(R(\theta_1,I_1,I_2),I_1,I_2)\\
&\quad
\times\sin 2(\theta_2-\chi_2(R(\theta_1,I_1,I_2),I_1,I_2))\biggr),\\
\dot{I}_2=&
 -\tfrac{3}{2}\epsilon^6\mu R(\theta_1,I_1,I_2)^2
 \sin 2(\theta_2-\chi_2(R(\theta_1,I_1,I_2),I_1,I_2)),\\
\dot{\theta}_1=&
\frac{1-\mu}{I_1^3}+O(\epsilon^6),\quad
\dot{\theta}_2=-\epsilon^3+O(\epsilon^6).
\end{split}
\label{eqn:ppe}
\end{equation}

Similarly to the treatment for \eqref{eqn:pp} stated just above Theorem~\ref{thm:main},
 the new variables $(v_1,v_2,v_3)\in\Cset\times(\Cset/2\pi\Zset)^2$ given by
\begin{align*}
V_1(v_1,r,I_1,I_2)
 :=&v_1^2+r^2-2I_1^2r+\frac{I_1^2I_2^2}{1-\mu}=0,\\
V_2(v_2,r,I_1,I_2)
 :=&I_1^2\left(I_1^2-\frac{I_2^2}{1-\mu}\right)(2\sin^2v_2-1)^2-(r-I_1^2)^2=0,\\
V_3(v_3,r,I_1,I_2)
 :=& I_1^2\left(r-\frac{I_2^2}{1-\mu}\right)^2(\tan^2v_3+1)^2\\
& -r^2\left(I_1^2-\frac{I_2^2}{1-\mu}\right)^2(\tan^2v_3-1)^2=0
\end{align*}
are introduced,
 so that the generating function \eqref{eqn:W} is regarded as an analytic one
 on the four-dimensional complex manifold
\begin{align*}
\bar{\S}_2=\{(r,\phi,I_1,I_2,v_1,v_2,v_3)
&\in\Cset\times(\Cset/2\pi\Zset)\times\Cset^3\times(\Cset/2\pi\Zset)^2\\
& \mid V_j(v_j,r,I_1,I_2)=0,\ j=1,2,3\}
\end{align*}
since Eq.~\eqref{eqn:chi} is represented by
\[
\chi(I_1,I_2,v_1,v_2,v_3)=v_1-2I_1^2v_2+\frac{2I_1I_2v_3}{\sqrt{1-\mu}}.
\]
Hence, we can regard \eqref{eqn:ppe}
 as a meromorphic two-degree-of-freedom Hamiltonian systems
 on the four-dimensional complex manifold
\begin{align*}
\hat{\S}_2=&\{(I_1,I_2,\theta_1,\theta_2,r,v_1,v_2,v_3)
\in\Cset^2\times(\Cset/2\pi\Zset)^2\times\Cset^2\times(\Cset/2\pi\Zset)^2\\
& \quad
 \mid\theta_1-\chi_1(r,I_1,I_2)=V_j(v_j,r,I_1,I_2)=0,\ j=1,2,3\},
\end{align*}
like \eqref{eqn:ppu} on $\S_2$ for \eqref{eqn:pp}.
Actually, we have
\begin{align*}
\frac{\partial V_j}{\partial v_j}\frac{\partial v_j}{\partial r}
 +\frac{\partial V_j}{\partial r}=0,\quad
\frac{\partial V_j}{\partial v_j}\frac{\partial v_j}{\partial I_l}
 +\frac{\partial V_j}{\partial I_l}=0,\quad
 j=1,2,3,\
 l=1,2,
\end{align*}
to express
\begin{align*}
\frac{\partial\chi}{\partial r}
 =\sum_{j=1}^3\frac{\partial\chi}{\partial v_j}\frac{\partial V_j}{\partial r},\quad
\chi_l=\frac{\partial\chi}{\partial I_l}
 +\sum_{j=1}^3\frac{\partial\chi}{\partial v_j}\frac{\partial V_j}{\partial I_l},\quad
 l=1,2
\end{align*}
as meromorophic functions of $(r,I_1,I_2,v_1,v_2,v_3)$ on $\bar{\S}_2$.
In particular, the Hamiltonian system has an additional first integral that is meromorphic
 in\linebreak
 $(I_1,I_2,\theta_1,\theta_2,v_1,v_2,v_3,\epsilon)$
 on $\hat{\S}_2\setminus\Sigma(\hat{\S}_2)$ near $\epsilon=0$
 if the system \eqref{eqn:pp} has an additional first integral
 that is meromorphic in $(x,y,p_x,p_y,u_1,u_2)$ on $\S_2\setminus\Sigma(\S_2)$
 near $(x,y)=(-\mu,0)$,
% and it does not if the hypotheses of Theorem~\ref{thm:tool} hold for \eqref{eqn:ppe}
 as in Theorem~2 of \cite{C13},
 since the corresponding Hamiltonian system
 has the same expression as \eqref{eqn:ppe}
 on $\hat{\S}_2\setminus\Sigma(\hat{\S}_2)$,
 where $\Sigma(\hat{\S}_2)$ is the critical set of $\hat{\S}_2$
 on which the projection $\hat{\pi}_2:\hat{\S}_2\to\Cset^2\times(\Cset/2\pi\Zset)^2$
 given by
\[
\hat{\pi}_2(I_1,I_2,\theta_1,\theta_2,r,v_1,v_2,v_3)=(I_1,I_2,\theta_1,\theta_2)
\]
is singular.

We next estimate the $O(\epsilon^6)$-term in the first equation of \eqref{eqn:ppe}
 for the unperturbed solutions. %its solutions with $\epsilon=0$.
When $\epsilon=0$, we see that $I_1,I_2,\theta_2$ are constants
 and can write $\theta_1=\omega_1 t+\theta_{10}$ for any solution to \eqref{eqn:ppe}, where
\begin{equation}
\omega_1=\frac{1-\mu}{I_1^3}
\label{eqn:omega1}
\end{equation}
and $\theta_{10}\in\Sset^1$ is a constant.
Since $r=R(\omega_1 t+\theta_{10},I_1,I_2)$ and
\[
\phi
 =-\chi_2(R(\omega_1 t+\theta_{10},I_1,I_2),I_1,I_2),
\]
respectively, become the $r$- and $\phi$-components of a solution to \eqref{eqn:pp0p},
 we have
\begin{equation}
\begin{split}
&
R(\omega_1 t+\theta_{10},I_1,I_2)
 =\frac{I_2^2}{(1-\mu)(1+e\cos(\phi(t)+\bar{\phi}(\theta_{10})))},\\
&
-\chi_2(R(\omega_1 t+\theta_{10},I_1,I_2),I_1,I_2)
 =\phi(t)+\bar{\phi}(\theta_{10})
\end{split}
\label{eqn:pprchi}
\end{equation}
by \eqref{eqn:ppr},
 where $\phi(t)$ is the $\phi$-component of a solution to \eqref{eqn:pp0p}
  and $\bar{\phi}(\theta_{10})$ is a constant depending on $\theta_{10}$. 
Differentiating both equations in \eqref{eqn:pprchi} with respect to $t$ yields
\begin{equation}
\begin{split}
&
\omega_1\frac{\partial R}{\partial\theta_1}(\omega_1t+\theta_{10},I_1,I_2)
=\frac{eI_2^2\sin(\phi(t)+\bar{\phi}(\theta_{10}))\dot{\phi}(t)}
 {(1-\mu)(1+e\cos(\phi(t)+\bar{\phi}(\theta_{10})))^2},\\
&
-\omega_1\frac{\partial\chi_2}{\partial r}
(R(\omega_1 t+\theta_{10},I_1,I_2),I_1,I_2)
\frac{\partial R}{\partial\theta_1}(\omega_1t+\theta_{10},I_1,I_2)
=\dot{\phi}(t).
\end{split}
\label{eqn:pprchi2}
\end{equation}
Using \eqref{eqn:pprchi} and \eqref{eqn:pprchi2},
 we can obtain the necessary expression of the $O(\epsilon^6)$-term.

We are ready to check the hypotheses of Theorem~\ref{thm:tool} for the system \eqref{eqn:ppe}.
Assumption~(A1) holds for any $I_1>0$.
Fix the values of $I_1,I_2$ at some $I_1^\ast,I_2^\ast>0$, and let $\omega^\ast=\omega_1/3$.
Since by the second equation of \eqref{eqn:pprchi} $\phi(t)$ is $2\pi/\omega_1$-periodic,
 we have
\[
\frac{2\pi I_2^{\ast 3}}{(1-\mu)^2(1-e^2)^{3/2}}=\frac{2\pi I_1^3}{1-\mu}
\]
by \eqref{eqn:T} and \eqref{eqn:omega1}, so that
\[
I_2^\ast=I_1^\ast(1-\mu)^{1/3}\sqrt{1-e^2}.
\]
From \eqref{eqn:omega1} we also have
\begin{align*}
\D\omega(I^\ast)
=&\begin{pmatrix}
-3(1-\mu)/I_1^{\ast 4} & 0\\
0 & 0
\end{pmatrix},
\end{align*}
where $I^\ast=(I_1^\ast,I_2^\ast)$.
We write the first component of \eqref{eqn:A2} with $k=5$ for $I=I^\ast$ as
\begin{align*}
\mathscr{I}_1^5(\theta)
=&-\frac{3\mu(1-\mu)}{2I_1^{\ast 4}}\int_{\gamma_\theta}\biggl(
\frac{\partial R}{\partial\theta_1}(\omega_1 t+\theta_1,I_1^\ast,I_2^\ast)
R(\omega_1 t+\theta_1,I_1^\ast,I_2^\ast)\notag\\
&
\times \biggl(3\cos 2(\theta_2
 -\chi_2(R(\omega_1 t+\theta_1,I_1^\ast,I_2^\ast),I_1^\ast,I_2^\ast))+1\notag\\
&
+3R(\omega_1 t+\theta_1,I_1^\ast,I_2^\ast)
 \frac{\partial\chi_2}{\partial r}
 (R(\omega_1 t+\theta_1,I_1^\ast,I_2^\ast),I_1^\ast,I_2^\ast)\notag\\
&
\times\sin 2(\theta_2
 -\chi_2(R(\omega_1 t+\theta_1,I_1^\ast,I_2^\ast),I_1^\ast,I_2^\ast))\biggr)\d t,
\end{align*}
where the closed loop $\gamma_\theta$ is specified below.
Using \eqref{eqn:pprchi} and \eqref{eqn:pprchi2},
 we compute 
\begin{align}
\mathscr{I}_1^5(\theta)=&\frac{3\mu I_2^{\ast 4}}{2(1-\mu)^2I_1^\ast}
\int_{\gamma_\theta}\dot{\phi}(t)\biggl(
\frac{3\sin 2(\phi(t)+\bar{\phi}(\theta_1)+\theta_2)}
 {(1+e\cos(\phi(t)+\bar{\phi}(\theta_1)))^2}\notag\\
&
-\frac{e\sin(\phi(t)+\bar{\phi}(\theta_1))
 (3\cos 2(\phi(t)+\bar{\phi}(\theta_1)+\theta_2)+1)}
 {(1+e\cos(\phi(t)+\bar{\phi}(\theta_1)))^3}\biggr)\d t.
\label{eqn:ppXi1}
\end{align}
By \eqref{eqn:pp0p} and \eqref{eqn:ppr} we have
\begin{equation}
\frac{\dot{\phi}(t)}{(1+e\cos\phi(t))^2}=\frac{(1-\mu)^2}{I_2^{\ast 3}}
=\frac{\omega_1}{(1-e^2)^{3/2}}.
\label{eqn:ppdphi}
\end{equation}
Using integration by substitution and the relation \eqref{eqn:ppdphi},
 we rewrite the above integral as
\begin{align}
\mathscr{I}_1^5(\theta)
=&\frac{9\mu I_2^\ast}{2I_1^\ast}\int_{\gamma_\theta}\biggl(\sin 2(\phi(t)+\theta_2)
 -\frac{e\sin\phi(t)(\cos 2(\phi(t)+\theta_2)+\tfrac{1}{3})}{1+e\cos\phi(t)}\biggr)\d t,
\label{eqn:ppXi2}
\end{align}
where the path of integration might change
 but the same notation $\gamma_\theta$ has still been used for it.

\begin{figure}[t]
\includegraphics[scale=0.7]{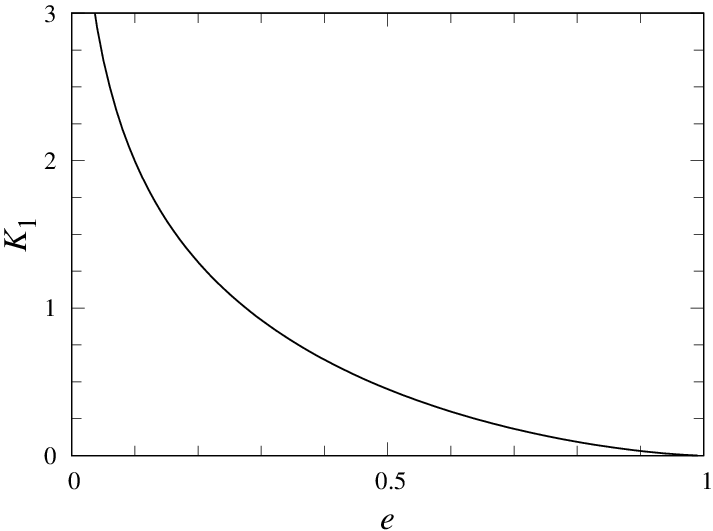}
\caption{Dependence of $K_1$ on $e$.
 \label{fig:3a}}
\end{figure}
Moreover, we integrate \eqref{eqn:ppdphi} to obtain
\begin{equation}
\omega_1 t=2\arctan\biggl(\frac{(1-e)\tan\tfrac{1}{2}\phi}{\sqrt{1-e^2}}\biggr)
 -\frac{e\sqrt{1-e^2}\sin\phi}{1+e\cos\phi}\quad
\mbox{for $\phi\in(-\pi,\pi)$},
\label{eqn:ppsol1}
\end{equation}
which is rewritten as
\begin{equation}
\omega_1 t=2\arccot\biggl(\frac{(1-e)\cot\tfrac{1}{2}(\phi+\pi)}{\sqrt{1-e^2}}\biggr)
 -\frac{e\sqrt{1-e^2}\sin\phi}{1+e\cos\phi}+\pi\quad
\mbox{for $\phi\in(0,2\pi)$},
\label{eqn:ppsol2}
\end{equation}
when $\phi(0)=0$ or $\lim_{t\to 0}\phi(t)=0$.
From \eqref{eqn:ppsol1} we see that as $\im\phi\to+\infty$, $\omega_1 t\to iK_1$, %and
%\begin{align*}
%\e^{-2i\phi}(t-iK_1)
%=-\frac{2i(1-e^2)^{3/2}}{e^2}+o(1),
%\end{align*}
where
\[
K_1=2\arctanh\biggl(\frac{1-e}{\sqrt{1-e^2}}\biggr)-\sqrt{1-e^2}>0.
\]
See Fig.~\ref{fig:3a}.
So the integrand in  \eqref{eqn:ppXi2} is singular at $t=iK_1$.
Let $K_2=\arccosh(1/e)$.
Then $1+e\cos\phi=0$ at $\phi=\pi+iK_2$, and by \eqref{eqn:ppsol2}
\[
\frac{1}{\omega_1 t}=\frac{1}{\sqrt{1-e^2}}\Delta\phi+o(\Delta\phi)
\]
near $\phi=\pi+iK_2$, where $\Delta\phi=\phi-(\pi+iK_2)$.
Moreover, near $\phi=\pi+iK_2$,
\begin{align*}
&
\sin\phi=-\frac{i\sqrt{1-e^2}}{e}%-\frac{1}{e}\Delta\phi
 +O(\Delta\phi),\quad
\cos\phi=-\frac{1}{e}+\frac{i\sqrt{1-e^2}}{e}\Delta\phi%+\frac{1}{2e}\Delta\phi^2
 +O(\Delta\phi^2),\\
&
\sin 2\phi=\frac{2i\sqrt{1-e^2}}{e^2}%+\frac{2(2-e^2)}{e^2}\Delta\phi
 +O(\Delta\phi),\quad
\cos 2\phi=\frac{2-e^2}{e^2}%-\frac{4i\sqrt{1-e^2}}{e^2}\Delta\phi
 +O(\Delta\phi).
\end{align*}

\begin{figure}[t]
\includegraphics[scale=1]{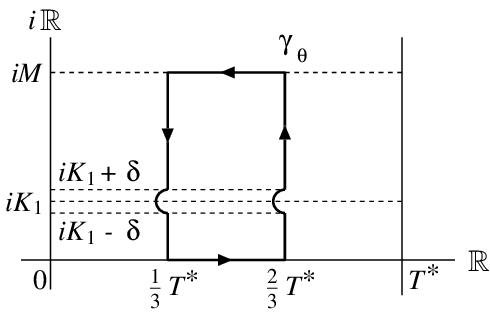}
\caption{Closed path $\gamma_\theta$.
 \label{fig:3b}}
\end{figure}

We take a closed path starting and ending at $t=\tfrac{1}{3}T^\ast$
 and passing through $t=\tfrac{2}{3}T^\ast$, $\tfrac{2}{3}T^\ast+i(K_1\mp\delta)$,
 $\tfrac{2}{3}T^\ast+iM$, $\tfrac{1}{3}T^\ast+iM$ and $\tfrac{1}{3}T^\ast+i(K_1\pm\delta)$
 as $\gamma_\theta$ in $\Cset/T^\ast\Zset$,
 where $\delta$ and $M$ are, respectively, sufficiently small and large positive constants.
See Fig.~\ref{fig:3b}.
Here $\gamma_\theta$ passes along the left circular arc centered
 at $\tfrac{2}{3}T^\ast+iK_1$ (resp. at $\tfrac{1}{3}T^\ast+iK_1$) with radius $\delta$
 between $\tfrac{2}{3}T^\ast+i(K_1-\delta)$ and $\tfrac{2}{3}T^\ast+i(K_1+\delta)$
 (resp. between $\tfrac{1}{3}T^\ast+i(K_1+\delta)$ and $\tfrac{1}{3}T^\ast+i(K_1-\delta)$).
We compute
\begin{align*}
&
\int_{2T^\ast\!/3+iM}^{T^\ast\!/3+iM}
 \frac{\sin\phi(t)(\cos 2(\phi(t)+\theta_2)+\tfrac{1}{3})}{1+e\cos\phi(t)}\d t\\
&
=-\int_{2T^\ast\!/3}^{T^\ast\!/3}\biggl(\frac{2-e^2}{e^3\sqrt{1-e^2}}\cos 2\theta_2
 -\frac{2i}{e^3}\sin 2\theta_2+\frac{1}{3e\sqrt{1-e^2}}\biggr)i\omega_1 M\,\d t+O(1)\\
&
=\frac{2\pi}{e^3}\biggl(\frac{2-e^2}{\sqrt{1-e^2}}i\cos2\theta_2+2\sin2\theta_2
 +\frac{ie^2}{3\sqrt{1-e^2}}\biggr)M+O(1),
\end{align*}
while
\[
\int_{2T^\ast\!/3+iM}^{T^\ast\!/3+iM}\sin 2(\phi(t)+\theta_2)\d t=O(1).
\]
Moreover, the integral on $[\tfrac{1}{3}T^\ast,\tfrac{2}{3}T^\ast]$ in \eqref{eqn:ppXi2} is $O(1)$,
 and the integrals from $\tfrac{2}{3}T^\ast$ to $\tfrac{2}{3}T^\ast+iM$
 and from $\tfrac{1}{3}T^\ast+iM$ to $\tfrac{1}{3}T^\ast$ cancel
 since the integrand is $\tfrac{1}{3}T^\ast$-periodic.. 
Thus, we see that the integral \eqref{eqn:ppXi2} is not zero
 for $M>0$ sufficiently large,
 so that assumption~(A2) holds.

Finally, we apply Theorem~\ref{thm:tool}
 to show that the meromorphic Hamiltonian system corresponding to \eqref{eqn:ppe}
 is not meromorphically integrable
 such that the first integral depends meromorphically on $\epsilon$ near $\epsilon=0$
 even if any higher-order terms are included.
Thus, we obtain the conclusion of Theorem~\ref{thm:main} for the planar case.
\hfill\qed

\begin{rmk}\
\label{rmk:3a}
\begin{enumerate}
\setlength{\leftskip}{-1.6em}
%\item[\rm(i)]
%As stated at the beginning of this section,
% we can apply the above arguments even when replacing $\mu$ with $1-\mu$.
%From the second equation of \eqref{eqn:con}
% we see that the system \eqref{eqn:pped} is nonintegrable in the meaning of Theorem~$\ref{thm:tool}$
% near the periodic orbits given by \eqref{eqn:ppr} with
%\begin{equation}
%\min(1-(1-\mu)^{1/3},1-\mu^{1/3})<e^2<1.
%\label{eqn:rmk3a}
%\end{equation}
%See Fig.~$\ref{fig:3c}$.
\item[\rm(i)]
The reader may think that a small circle centered
 at $t=\tfrac{1}{3}T^\ast+iK_1$ or $\tfrac{2}{3}T^\ast+iK_1$ can be taken as $\gamma_\theta$
 in the proof,
 since the integrand in \eqref{eqn:ppXi2} is singular there.
However, the integral \eqref{eqn:ppXi2} for the path is estimated to be zero
 $($cf. Section~$3$ of {\rm\cite{Y21b}}$)$.
\item[\rm(ii)]
The different change of coordinates
\[
\epsilon\xi=x+\mu,\quad
\epsilon\eta=y,\quad
p_\xi=p_x,\quad
p_\eta=p_y+\mu
\]
in \eqref{eqn:pp} yields
\begin{equation}
\begin{split}
&
\dot{\xi}=p_\xi+\epsilon\eta,\quad
\dot{p}_\xi=\epsilon p_\eta-\epsilon^{-1}\frac{(1-\mu)\xi}{(\xi^2+\eta^2)^{3/2}}
 +2\epsilon^2\mu\xi,\\
&
\dot{\eta}=p_\eta-\epsilon\xi,\quad
\dot{p}_\eta=-\epsilon p_\xi-\epsilon^{-1}\frac{(1-\mu)\eta}{(\xi^2+\eta^2)^{3/2}}
 -\epsilon^2\mu\eta
\end{split}
\label{eqn:rmk1}
\end{equation}
up to $O(\epsilon^2)$ after the time scaling $t\to t/\epsilon$.
As in {\rm\cite{MSS05}} we use the Levi-Civita regularization
\[
\begin{pmatrix}
\xi\\
\eta
\end{pmatrix}
=
\begin{pmatrix}
q_1 & -q_2\\
q_2 & q_1
\end{pmatrix}
\begin{pmatrix}
q_1\\
q_2
\end{pmatrix},\quad
\begin{pmatrix}
p_\xi\\
p_\eta
\end{pmatrix}
=\frac{2}{q_1^2+q_2^2}
\begin{pmatrix}
q_1 & -q_2\\
q_2 & q_1
\end{pmatrix}
\begin{pmatrix}
p_1\\
p_2
\end{pmatrix},
\]
$($see, e.g.,{\rm \cite{SS00}} or Section~$8.8.1$ of {\rm\cite{MO17}}$)$ to obtain
\begin{align*}
H+C_0
=& \frac{4}{q_1^2+q_2^2}\bigl(\tfrac{1}{4}C_0(q_1^2+q_2^2)+\tfrac{1}{2}(p_1^2+p_2^2)
 +\tfrac{1}{2}\epsilon(q_1^2+q_2^2)(q_2p_1-q_1p_2)\\
&
 -\tfrac{1}{2}\epsilon^2\mu(q_1^2+q_2^2)(q_1^4q_1^2q_2^2-4+q_2^4))
 -\tfrac{1}{4}\epsilon^{-1}(1-\mu)\bigr),
\end{align*}
which yields
\begin{align*}
H+C_0=& \frac{4}{q_1^2+q_2^2}\bigl(\tfrac{1}{4}C_0(q_1^2+q_2^2)+\tfrac{1}{2}(p_1^2+p_2^2)
 +\tfrac{1}{2}(q_1^2+q_2^2)(q_2p_1-q_1p_2)\\
&
 -\tfrac{1}{2}\mu(q_1^2+q_2^2)(q_1^4q_1^2q_2^2-4+q_2^4))
 -\tfrac{1}{4}(1-\mu)\bigr)
\end{align*}
after the scaling $(q,p)\to(q,p)/\epsilon^{3/2}$.
Using the approach of {\rm\cite{MSS05}},
 we can show that the Hamiltonian system with the Hamiltonian
\begin{align*}
\tilde{H}=&\tfrac{1}{4}C_0(q_1^2+q_2^2)+\tfrac{1}{2}(p_1^2+p_2^2)
 +\tfrac{1}{2}(q_1^2+q_2^2)(q_2p_1-q_1p_2)\\
&
 -\tfrac{1}{2}\mu(q_1^2+q_2^2)(q_1^4q_1^2q_2^2-4+q_2^4)
\end{align*}
is meromorphically nonintegrable.
This implies that the Hamiltonian \eqref{eqn:rmk1} is also meromorphically nonintegrable
 for $\epsilon>0$ fixed.
\end{enumerate}
\end{rmk}

\section{Spatial Case}

We prove Theorem~\ref{thm:main} for the spatial case \eqref{eqn:sp}.
As in the planar case,
 we only consider a neighborhood of $(x,y,z)=(-\mu,0,0)$
  and introduce a small parameter $\epsilon$ such that $0<\epsilon\ll 1$.
Letting
\begin{align*}
&
\epsilon^2\xi=x+\mu,\quad
\epsilon^2\eta=y,\quad
\epsilon^2\zeta=z,\\
&
\epsilon^{-1}p_\xi=p_x,\quad
\epsilon^{-1}p_\eta=p_y+\mu,\quad
\epsilon^{-1}p_\zeta=p_z
\end{align*}
and scaling the time variable $t\to\epsilon^3 t$, we rewrite \eqref{eqn:sp} as
\begin{align*}
&
\dot{\xi}=p_\xi+\epsilon^3\eta,\quad
\dot{\eta}=p_\eta-\epsilon^3\xi,\quad
\dot{\zeta}=p_\zeta,\\
&
\dot{p}_\xi=-\frac{(1-\mu)\xi}{(\xi^2+\eta^2+\zeta^2)^{3/2}}
 +\epsilon^3p_\eta-\epsilon^4\mu-\epsilon^4\frac{\mu(\epsilon^2\xi-1)}
 {((\epsilon^2\xi-1)^2+\epsilon^4(\eta^2+\zeta^2))^{3/2}},\\
&
\dot{p}_\eta=-\frac{(1-\mu)\eta}{(\xi^2+\eta^2+\zeta^2)^{3/2}}
 -\epsilon^3 p_\xi-\epsilon^6\frac{\mu\eta}
 {((\epsilon^2\xi-1)^2+\epsilon^4(\eta^2+\zeta^2))^{3/2}},\\
&
\dot{p}_\zeta=-\frac{(1-\mu)\zeta}{(\xi^2+\eta^2+\zeta^2)^{3/2}}
 -\epsilon^6\frac{\mu\zeta}
 {((\epsilon^2\xi-1)^2+\epsilon^4(\eta^2+\zeta^2)))^{3/2}},
\end{align*}
or up to the order of $\epsilon^6$,
\begin{equation}
\begin{split}
&
\dot{\xi}=p_\xi+\epsilon^3\eta,\quad
\dot{p}_\xi=-\frac{(1-\mu)\xi}{(\xi^2+\eta^2+\zeta^2)^{3/2}}
 +\epsilon^3p_\eta+2\epsilon^6\mu\xi,\\
&
\dot{\eta}=p_\eta-\epsilon^3\xi,\quad
\dot{p}_\eta=-\frac{(1-\mu)\eta}{(\xi^2+\eta^2+\zeta^2)^{3/2}}
 -\epsilon^3p_\xi-\epsilon^6\mu\eta,\\
&
\dot{\zeta}=p_\zeta,\quad
\dot{p}_\zeta=-\frac{(1-\mu)\zeta}{(\xi^2+\eta^2+\zeta^2)^{3/2}}-\epsilon^6\mu\zeta,
\end{split}
\label{eqn:sped}
\end{equation}
like \eqref{eqn:pped}, where the $O(\epsilon^8)$ terms have been eliminated.
Equation~\eqref{eqn:sped} is a Hamiltonian system with the Hamiltonian
\begin{align}
H=&\tfrac{1}{2}(p_\xi^2+p_\eta^2+p_\zeta^2)-\frac{1-\mu}{\sqrt{\xi^2+\eta^2+\zeta^2}}\notag\\
& +\epsilon^3(\eta p_\xi-\xi p_\eta)-\tfrac{1}{2}\epsilon^6(2\xi^2-\eta^2-\zeta^2).
 \label{eqn:H3}
\end{align}

\begin{figure}[t]
\includegraphics[scale=0.8]{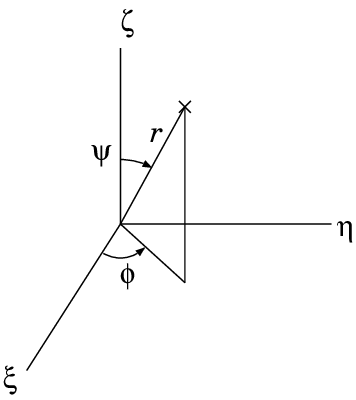}
\caption{Spherical coordinates.
 \label{fig:4a}}
\end{figure}

We next rewrite \eqref{eqn:H3} in the spherical coordinates.
See Fig.~\ref{fig:4a}.
Let
\[
\xi=r\sin\psi\cos\phi,\quad
\eta=r\sin\psi\sin\phi,\quad
\zeta=r\cos\psi.
\]
The momenta $(p_r,p_\phi,p_\psi)$ corresponding to $(r,\phi,\psi)$ satisfy
\begin{align*}
&
p_\xi=p_r\cos\phi\sin\psi-\frac{p_\phi\sin\phi}{r\sin\psi}+\frac{p_\psi}{r}\cos\phi\cos\psi,\\
&
p_\eta=p_r\sin\phi\sin\psi+\frac{p_\phi\cos\phi}{r\sin\psi}+\frac{p_\psi}{r}\sin\phi\cos\psi,\\
&
p_\zeta=p_r\cos\psi-\frac{p_\psi}{r}\sin\psi
\end{align*}
(see, e.g., Section~8.7 of \cite{MO17}).
The Hamiltonian becomes
\begin{align*}
H=&\tfrac{1}{2}\left(p_r^2+\frac{p_\psi^2}{r^2}+\frac{p_\phi^2}{r^2\sin^2\psi}\right)
 -\frac{1-\mu}{r}\notag\\
& -\epsilon^3 p_\phi
 -\epsilon^6\mu r^2\left(\tfrac{1}{4}\sin^2\psi(3\cos 2\phi+1)-\tfrac{1}{2}\cos^2\psi \right).
\end{align*}
Up to $O(1)$, the corresponding Hamiltonian system becomes
\begin{equation}
\begin{split}
&
\dot{r}=p_r,\quad
\dot{p}_r=\frac{p_\psi^2}{r^3}+\frac{p_\phi^2}{r^3\sin^2\psi}-\frac{1-\mu}{r^2},\\
&
\dot{\phi}=\frac{p_\phi}{r^2\sin^2\psi},\quad
\dot{p}_\phi=0,\quad
\dot{\psi}=\frac{p_\psi}{r^2},\quad
\dot{p}_\psi=\frac{p_\phi^2\cos\psi}{r^2\sin^3\psi}.
\label{eqn:sp0p}
\end{split}
\end{equation}
We have the relation \eqref{eqn:ppr} for periodic orbits on the $(\xi,\eta)$-plane
 since Eq.~\eqref{eqn:sp0p} reduces to \eqref{eqn:pp0p}
 when $\psi=\tfrac{1}{2}\pi$ and $p_\psi=0$.

As in the planar case,
 we introduce  the Delaunay elements obtained from the generating function
\begin{equation}
\hat{W}(r,\phi,\psi,I_1,I_2.I_3)=I_3\phi+\chi(r,I_1,I_2)+\hat{\chi}(\psi,I_2,I_3),
\label{eqn:hW}
\end{equation}
where
\begin{align}
\hat{\chi}(\psi,I_2,I_3)
=&\int_{\psi_0}^\psi\left(I_2^2-\frac{I_3^2}{\sin^2s}\right)^{1/2}\d s\notag\\
=& I_2\arctan\frac{\sqrt{I_2^2\sin^2\psi-I_3^2}}{I_2\cos\psi}
 -I_3\arctan\frac{\sqrt{I_2^2\sin^2\psi-I_3^2}}{I_3\cos\psi}
\label{eqn:hchi}
\end{align}
with $\psi_0=\arcsin(I_3/I_2)$.
See, e.g., Section~8.9.3 of \cite{MO17},
 although a slightly modified generating function is used here.
We have
\begin{equation}
\begin{split}
&
p_r=\frac{\partial \hat{W}}{\partial r}
=\frac{\partial\chi}{\partial r}(r,I_1,I_2),\quad
p_\phi=\frac{\partial \hat{W}}{\partial\phi}
=I_3,\\
&
p_\psi=\frac{\partial\hat{W}}{\partial\psi}
=\frac{\partial\hat{\chi}}{\partial\psi}(\psi,I_2,I_3),\quad
\theta_1=\frac{\partial \hat{W}}{\partial I_1}
=\chi_1(r,I_1,I_2),\\
&
\theta_2=\frac{\partial\hat{W}}{\partial I_2}
=\chi_2(r,I_1,I_2)+\hat{\chi}_2(\psi,I_2,I_3),\quad
\theta_3=\frac{\partial \hat{W}}{\partial I_3}
=\phi+\hat{\chi}_3(\psi,I_2,I_3),
\end{split}
\label{eqn:spd}
\end{equation}
where
\begin{align*}
\hat{\chi}_2(\psi,I_2,I_3)
=\frac{\partial\hat{\chi}}{\partial I_2}(\psi,I_2,I_3),\quad
\hat{\chi}_3(\psi,I_2,I_3)
=\frac{\partial\hat{\chi}}{\partial I_3}(\psi,I_2,I_3).
\end{align*}
Since the transformation from $(r,\phi,\psi,p_r,p_\phi,p_\psi)$
 to $(\theta_1,\theta_2,\theta_3,I_1,I_2,I_3)$ is symplectic,
 the transformed system is also Hamiltonian and its Hamiltonian is given by
\begin{align*}
H=&-\frac{(1-\mu)}{2I_1^2}-\epsilon^3 I_3
 -\epsilon^6\mu R(\theta_1,I_1,I_2)^2(\tfrac{1}{4}\sin^2\Psi(\theta_1,\theta_2,I_1,I_2,I_3)\\
&\quad
\times (3\cos 2(\theta_3-\hat{\chi}_3(\Psi(\theta_1,\theta_2,I_1,I_2,I_3),I_1,I_2))+1)\\
&\quad 
-\tfrac{1}{2}\cos^2\Psi(\theta_1,\theta_2,I_1,I_2,I_3)),
\end{align*}
where $r=R(\theta_1,I_1,I_2)$ and $\psi=\Psi(\theta_1,\theta_2,I_1,I_2,I_3)$
 are the $r$- and $\psi$-components of the symplectic transformation
 satisfying \eqref{eqn:R} and
\[
\hat{\chi}_2(\Psi(\theta_1,\theta_2,I_1,I_2,I_3),I_2,I_3)+\chi_2(R(\theta_1,I_1,I_2),I_1,I_2)
 =\theta_2,
\]
respectively.
Thus, we obtain the Hamiltonian system as
\begin{equation}
\begin{split}
\dot{I}_1=& \tfrac{1}{2}\epsilon^6\mu\hat{h}_1(I,\theta),\quad
\dot{I}_2=O(\epsilon^6),\quad
\dot{I}_3=O(\epsilon^6),\\
\dot{\theta}_1=& \frac{1-\mu}{I_1^3}+O(\epsilon^6),\quad
\dot{\theta}_2=O(\epsilon^6),\quad
\dot{\theta}_3=-\epsilon^3+O(\epsilon^6),
\end{split}
\label{eqn:spe}
\end{equation}
where
\begin{align*}
\hat{h}_1(I,\theta)
=& \frac{\partial R}{\partial\theta_1}(\theta_1,I_1,I_2)R(\theta_1,I_1,I_2)
 (\sin^2\Psi(\theta_1,\theta_2,I_1,I_2,I_3)\\
&\quad
\times(3\cos 2(\theta_3-\hat{\chi}_3(\Psi(\theta_1,\theta_2,I_1,I_2,I_3),I_2,I_3))+1)\\
&\quad
-2\cos^2\Psi(\theta_1,\theta_2,I_1,I_2,I_3))\\
&
+R(\theta_1,I_1,I_2)^2\frac{\partial\Psi}{\partial\theta_1}(\theta_1,\theta_2,I_1,I_2,I_3)\\
&\quad
\times 3\sin\Psi(\theta_1,\theta_2,I_1,I_2,I_3)\cos\Psi(\theta_1,\theta_2,I_1,I_2,I_3)\\
&\quad
\times(\cos 2(\theta_3-\hat{\chi}_3(\Psi(\theta_1,\theta_2,I_1,I_2,I_3),I_2,I_3))+1)\\
&
+3R(\theta_1,I_1,I_2)^2\sin^2\Psi(\theta_1,\theta_2,I_1,I_2,I_3)\\
&\quad
\times\frac{\partial\Psi}{\partial\theta_1}(\theta_1,\theta_2,I_1,I_2,I_3)
 \frac{\partial\hat{\chi}_3}{\partial\psi}(\Psi(\theta_1,\theta_2,I_1,I_2,I_3),I_2,I_3)\\
& \quad
\times\sin2(\theta_3-\hat{\chi}_3(\Psi(\theta_1,\theta_2,I_1,I_2,I_3),I_2,I_3)).
\end{align*}

As in the planar case, the new variables $w_1,w_2\in(\Cset/2\pi)$ given by
\begin{align*}
&
W_1(w_1,\psi,I_2,I_3):=I_2^2\cos^2\psi\tan^2w_1-I_2^2\sin^2\psi+I_3^2=0,\\
&
W_2(w_2,\psi,I_2,I_3):=I_3^2\cos^2\psi\tan^2w_2-I_2^2\sin^2\psi+I_3^2=0
\end{align*}
are introduced,
 so that the generating function \eqref{eqn:hW} is regarded as an analytic one
 on the six--dimensional complex manifold
\begin{align*}
\bar{\S}_3=&\{(r,\phi,\psi,I_1,I_2,I_3,v_1,v_2,v_3,w_1,w_2)
\in\Cset\times(\Cset/2\pi\Zset)^2\times\Cset^4\times(\Cset/2\pi\Zset)^4\\
&\quad
\mid V_j(v_j,r,I_1,I_2)=W_l(w_l,\psi,I_2,I_3)=0,\ j=1,2,3,\ l=1,2\}
\end{align*}
since Eq.~\eqref{eqn:hchi} is represented by
\[
\hat{\chi}(I_2,I_3;v_1,v_2)=I_2w_1-I_3w_2.
\]
Moreover, we can regard \eqref{eqn:spe}
 as a meromorphic three-degree-of-freedom Hamiltonian systems
 on the six-dimensional complex manifold
\begin{align*}
\hat{\S}_3=&\{(I_1,I_2,I_3,\theta_1,\theta_2,\theta_3,r,v_1,v_2,v_3,w_1,w_2)
 \in\Cset^3\times(\Cset/2\pi\Zset)^3\times\Cset^2\times(\Cset/2\pi\Zset)^4\\
&\quad
 \mid \theta_1-\chi_1(r,I_1,I_2)=V_j(v_j,r,I_1,I_2)=W_l(w_l,\psi,I_2,I_3)=0,\\
&\qquad
  j=1,2,3,\ l=1,2\},
\end{align*}
like \eqref{eqn:spu} on $\S_3$ for \eqref{eqn:sp}.
Actually, we have
\begin{align*}
\frac{\partial W_j}{\partial w_j}\frac{\partial w_j}{\partial\psi}
 +\frac{\partial W_j}{\partial\psi}=0,\quad
\frac{\partial W_j}{\partial w_j}\frac{\partial w_j}{\partial I_l}
 +\frac{\partial W_j}{\partial I_l}=0,\quad
 j=1,2,\ l=2,3
\end{align*}
to express
\begin{align*}
\frac{\partial\hat{\chi}}{\partial\psi}
 =\sum_{j=1}^2\frac{\partial\hat{\chi}}{\partial w_j}\frac{\partial W_j}{\partial\psi},\quad
\hat{\chi}_l=\frac{\partial\hat{\chi}}{\partial I_l}
 +\sum_{j=1}^2\frac{\partial\hat\chi}{\partial w_j}\frac{\partial W_j}{\partial I_l},\quad
 l=2,3
\end{align*}
as meromorophic functions of $(\psi,I_2,I_3,w_1,w_2)$ on $\bar{\S}_3$.
In particular, the Hamiltonian system has two additional meromorphic integrals
 that are meromorphic
 in $(I_1,I_2,I_3,\theta_1,\theta_2,\theta_3,r,v_1,v_2,v_3,w_1,w_ 2,\epsilon)$
 on $\hat{\S}_3\setminus\Sigma(\hat{\S}_3)$ near $\epsilon=0$,
 if the system \eqref{eqn:sp} has two additional meromorphic integrals
 that are meromorphic in\linebreak
 $(x,y,z,p_x,p_y,p_z,u_1,u_2)$
 on $\S_3\setminus\Sigma(\S_3)$ near $(x,y,z)=(-\mu,0,0)$,
% and it does not if the hypotheses of Theorem~\ref{thm:tool} hold,
 as in the planar case.
Here $\Sigma(\hat{\S}_3)$ is the critical set of $\hat{\S}_3$
 on which the projection $\hat{\pi}_3:\hat{\S}_3\to\Cset^3\times(\Cset/2\pi\Zset)^3$
 given by
\[
\hat{\pi}_3(I_1,I_2,I_3,\theta_1,\theta_2,\theta_3,r,v_1,v_2,v_3,w_1,w_2)
 =(I_1,I_2,I_3,\theta_1,\theta_2,\theta_3)
\]
is singular.

We next estimate the function $\hat{h}_1(I,\theta)$
 for solutions to \eqref{eqn:spe} with $\epsilon=0$ on the plane of $\psi=\tfrac{1}{2}\pi$.
When $\epsilon=0$, we see that $I_1,I_2,I_3,\theta_2,\theta_3$ are constants
 and can write $\theta_1=\omega_1 t+\theta_{10}$
 for any solution to \eqref{eqn:spe} with \eqref{eqn:omega1},
 where $\theta_{10}\in\Sset^1$ is a constant.
Note that if $\psi=\tfrac{1}{2}\pi$ and $p_\psi=0$, then $I_2=I_3$ by \eqref{eqn:spd}.
Since $r=R(\omega_1 t+\theta_{10},I_1,I_2)$ and
\[
\phi=-\hat{\chi}_3(\Psi(\omega_1 t+\theta_{10},\theta_2,I_1,I_3,I_3),I_3,I_3),\quad
\Psi(\omega_1 t+\theta_{10},\theta_2,I_1,I_3,I_3)=\tfrac{1}{2}\pi,
\]
respectively,
 become the $r$- and $\phi$-components of a solution to \eqref{eqn:sp0p}
 with $\psi=\tfrac{1}{2}\pi$ and $p_\psi=0$,
 we have the first equation of \eqref{eqn:pprchi} with
\begin{equation}
-\hat{\chi}_3(\Psi(\omega_1 t+\theta_{10},
 \theta_2,I_1,I_3,I_3),I_3,I_3)
 =\phi(t)+\bar{\phi}(\theta_{10}),
\label{eqn:sprchi}
\end{equation}
where $\phi(t)$ is the $\phi$-component of a solution to \eqref{eqn:pp0p}
 and $\bar{\phi}(\theta_1)$ is a constant depending only on $\theta_1$ as in the planar case.
Differentiating \eqref{eqn:sprchi} with respect to $t$ yields
\begin{align}
&
-\omega_1\frac{\partial \hat{\chi}_3}{\partial\psi}
 (\Psi(\omega_1 t+\theta_{10},
 \theta_2,I_1,I_3,I_3),I_3,I_3)\notag\\
&\qquad
\times\frac{\partial\Psi}{\partial\theta_1}(\omega_1 t+\theta_{10},
 \theta_2,I_1,I_3,I_3)=\dot{\phi}(t).
\label{eqn:sprchi2}
\end{align}
Using \eqref{eqn:pprchi}, \eqref{eqn:pprchi2}, \eqref{eqn:sprchi} and \eqref{eqn:sprchi2},
 we can obtain the necessary expression of $\hat{h}_1(I,\theta)$.

We are ready to check the hypotheses of Theorem~\ref{thm:tool} for the system {eqn:spe}.
Assumption~(A1) holds for any $I_1>0$.
Fix the value of $I_1$ at some $I_1^\ast>0$, and let $\omega^\ast=\omega_1/3$.
By the first equation of \eqref{eqn:sprchi} $\phi(t)$ is $2\pi/\omega_1$-periodic,
 so that by \eqref{eqn:T} and \eqref{eqn:omega1}
\[
I_2=I_3=I_1^\ast(1-\mu)^{1/3}\sqrt{1-e^2}\ (=I_2^\ast).
\]
From \eqref{eqn:omega1} we have
\[
\D\omega(I^\ast)=
\begin{pmatrix}
-3(1-\mu)/I_1^{\ast 4} & 0 & 0\\
0 & 0 & 0\\
0 & 0 & 0
\end{pmatrix},
\]
where $I^\ast=(I_1^\ast,I_2^\ast,I_2^\ast)$.
Using the first equations of \eqref{eqn:pprchi} and \eqref{eqn:pprchi2},
 \eqref{eqn:sprchi} and \eqref{eqn:sprchi2},
 we compute the first component of \eqref{eqn:A2} with $k=5$ for $I=I^\ast$ as
\begin{align*}
\mathscr{I}_1^5(\theta)
=&-\frac{3(1-\mu)}{I_1^{\ast 4}}\int_{\gamma_\theta}
 h_1(I^\ast,\omega^\ast t+\theta_1,\theta_2,\theta_3)\d\omega^\ast t\\
=&\frac{3\mu I_2^{\ast 4}}{2(1-\mu)^2I_1^\ast}
 \int_{\gamma_\theta}\dot{\phi}(t)\biggl(
\frac{3\sin 2(\phi(t)+\bar{\phi}(\theta_1)+\theta_3)}
 {(1+e\cos(\phi(t)+\bar{\phi}(\theta_1)))^2}\\
&
+\frac{e\sin(\phi(t)+\bar{\phi}(\theta_1))
 (3\cos 2(\phi(t)+\bar{\phi}(\theta_1)+\theta_3)+1)}
 {(1+e\cos(\phi(t)+\bar{\phi}(\theta_1)))^3}\biggr)\d t,
\end{align*}
which has the same expression as \eqref{eqn:ppXi1} with $\theta_2=\theta_3$.
%In particular, when $e>0$ is sufficiently small,
% we have the approximate expression \eqref{eqn:ppphi} for $\phi(t)=\phi(t)+t$.
Repeating the arguments given in Section~3,
 we can show that assumption~(A2) holds as in the planar case.
Finally, we apply Theorem~\ref{thm:tool}
 to show that the meromorphic Hamiltonian system corresponding to \eqref{eqn:spe}
 is not meromorphically integrable
 such that the first integrals depend meromorphically on $\epsilon$ near $\epsilon=0$.
Thus, we complete the proof of Theorem~\ref{thm:main} for the spatial case.
\hfill\qed
 
\section*{Acknowledgements}
The author thanks Mitsuru Shibayama, Shoya Motonaga and Taiga Kurokawa
 for helpful discussions, and David Bl\'azquez-Sanz for his useful comments.
This work was partially supported by the JSPS KAKENHI Grant Number JP17H02859.

%\section*{Data Availability}
%Data sharing not applicable to this article as no dataset was generated or analyzed during the current study.

% **********************************************************
% Appendices
% **********************************************************

\appendix

\renewcommand{\theequation}{A.\arabic{equation}}
\setcounter{equation}{0}

\section{Differential Galois Theory}
In this appendix,
 we give necessary information on differential Galois theory for linear differential equations,
 which is often referred to as the Picard-Vessiot theory.
See the textbooks \cite{CH11,PS03} for more details on the theory.

Consider a linear system of differential equations
\begin{equation}\label{LinearSystem}
y'=Ay,\quad A\in\mathrm{gl}(n,\Kset),
\end{equation}
where $\Kset$ is a differential field and
 $\mathrm{gl}(n,\Kset)$ denotes the ring of $n\times n$ matrices
 with entries in $\Kset$.
Here a \emph{differential field} is a field
 endowed with a derivation $\partial$,
 which is an additive endomorphism
 satisfying the Leibniz rule.
The set $\mathrm{C}_{\Kset}$ of elements of $\Kset$ for which $\partial$ vanishes
 is a subfield of $\Kset$
and called the \emph{field of constants of $\Kset$}.
In our application of the theory in this paper,
 the differential field $\Kset$ is
 the field of meromorphic functions on a Riemann surface,
 so that the field of constants is $\Cset$.

A \emph{differential field extension} $\Lset\supset \Kset$
 is a field extension such that $\Lset$ is also a differential field
 and the derivations on $\Lset$ and $\Kset$ coincide on $\Kset$.
A differential field extension $\Lset\supset \Kset$
 satisfying the following two conditions is called a \emph{Picard-Vessiot extension}
 for \eqref{LinearSystem}:
\begin{enumerate}
%\item[\bf (PV1)]
%There exists a fundamental matrix $\Xi(x)$ of \eqref{LinearSystem} with entries in $\Lset$;
\item[\bf (PV1)]
The field $\Lset$ is generated by $\Kset$
 and elements of a fundamental matrix of \eqref{LinearSystem} ;
\item[\bf (PV2)]
The fields of constants for $\Lset$ and $\Kset$ coincide.
\end{enumerate}
The system \eqref{LinearSystem}
 admits a Picard-Vessiot extension which is unique up to isomorphism.

We now fix a Picard-Vessiot extension $\Lset\supset \Kset$
 and fundamental matrix $\Phi$ with entries in $\Lset$
 for \eqref{LinearSystem}.
Let $\sigma$ be a \emph{$\Kset$-automorphism} of $\Lset$,
 which is a field automorphism of $\Lset$
 that commutes with the derivation of $\Lset$
 and leaves $\Kset$ pointwise fixed.
Obviously, $\sigma(\Phi)$ is also a fundamental matrix of \eqref{LinearSystem}
 and consequently there is a matrix $M_\sigma$ with constant entries
 such that $\sigma(\Phi)=\Phi M_\sigma$.
This relation gives a faithful representation
 of the group of $\Kset$-automorphisms of $\Lset$
 on the general linear group as
\[
R\colon \mathrm{Aut}_{\Kset}(\Lset)\to\GL(n,\mathrm{C}_{\Lset}),
\quad \sigma\mapsto M_{\sigma},
\]
where $\GL(n,\mathrm{C}_{\Lset})$
is the group of $n\times n$ invertible matrices with entries in $\mathrm{C}_{\Lset}$.
The image of $R$
 is a linear algebraic subgroup of $\GL(n,\mathrm{C}_{\Lset})$,
 which is called the \emph{differential Galois group} of \eqref{LinearSystem}
 and often denoted by $\mathrm{Gal}(\Lset/\Kset)$.
This representation is not unique
 and depends on the choice of the fundamental matrix $\Phi$,
 but a different fundamental matrix only gives rise to a conjugated representation.
Thus, the differential Galois group is unique up to conjugation
 as an algebraic subgroup of the general linear group.

Let $G\subset\GL(n,\mathrm{C}_{\Lset})$ be an algebraic group.
Then it contains a unique maximal connected algebraic subgroup $G^0$,
 which is called the \emph{connected component of the identity}
 or \emph{identity component}.
The identity component $G^0\subset G$ is
% a normal algebraic subgroup and
 the smallest subgroup of finite index, i.e., the quotient group $G/G^0$ is finite.

\section{Monodromy matrices}

In this appendix,
 we give general information on monodromy matrices for the reader's convenience.

Let $\Kset$ be the field of meromorphic functions on a Riemann surface $\Gamma$,
 and consider the linear system \eqref{LinearSystem}.
Let $t_0\in\Gamma$ be a nonsingular point for \eqref{LinearSystem}.
We prolong the fundamental matrix $\Phi(t)$ analytically
 along any loop $\gamma$ based at $t_0$ and containing no singular point,
 and obtain another fundamental matrix $\gamma*\Phi(t)$.
So there exists a constant nonsingular matrix $M_{[\gamma]}$ such that
\[
\gamma*\Phi(t) = \Phi(t)M_{[\gamma]}.
\]
The matrix $M_{[\gamma]}$ depends on the homotopy class $[\gamma]$
 of the loop $\gamma$
 and is called the \emph{monodromy matrix} of $[\gamma]$.

Let $\Lset$ be a Picard-Vessiot extension of \eqref{LinearSystem}
 and let $\mathrm{Gal}(\Lset/\Kset)$ be the differential Galois group,
 as in Appendix~A.
Since analytic continuation commutes with differentiation,
 we have $M_{[\gamma]}\in\mathrm{Gal}(\Lset/\Kset)$.

% **********************************************************
% Bibliography
% **********************************************************

\end{document}